\font\smallit=cmti10
\renewcommand\section{\@startsection {section}{1}{\z@}
{-30pt \@plus -1ex \@minus -.2ex}
{2.3ex \@plus.2ex}
{\normalfont\normalsize\bfseries\boldmath}}
\renewcommand\subsection{\@startsection{subsection}{2}{\z@}
{-3.25ex\@plus -1ex \@minus -.2ex}
{1.5ex \@plus .2ex}
{\normalfont\normalsize\bfseries\boldmath}}
\renewcommand{\@seccntformat}[1]{\csname the#1\endcsname. }
\newtheorem{theorem}{Theorem}
\newtheorem{lemma}{Lemma}
\newtheorem{proposition}[theorem]{Proposition}
\newtheorem{corollary}{Corollary}
\theoremstyle{definition}
\newtheorem{definition}{Definition}
\newtheorem{conjecture}{Conjecture}
\newtheorem{open}{Open Problem}
\newtheorem{example}{Example}
\newcommand{\ruleset}[1]{\textsc{#1}}
\newcommand{\SN}[2]{\ruleset{SN\ensuremath{\left(#1,#2\right)}}}
\newcommand{\SNr}[2]{\ruleset{SN\ensuremath{_{\tilde{\mathcal{G}}}\left(#1,#2\right)}}}
\def\sp{\ensuremath{\Sigma(\p)}}
\def\spp{\ensuremath{\Sigma(\p')}}
\def\N{\ensuremath{\mathcal{N}}}
\def\P{\ensuremath{\mathcal{P}}}
\def\T{\ensuremath{\mathcal{T}}}
\def\p{\ensuremath{\bm{p}}}
\def\q{\ensuremath{\bm{q}}}
\def\Nim{\ruleset{Nim}}
\def\SAN{\ruleset{Slow$A$-Nim}}
\begin{document}
							
\begin{center}
\uppercase{\bf \boldmath On the \P-positions of some infinite families of \ruleset{Slow$A$-Nim}}
\vskip 20pt
{\bf Matthieu Dufour}\\
{\smallit Department of Mathematics, University of Quebec, Montreal, Quebec, Canada}\\
{\tt dufour.matthieu@uqam.ca}\\ 
\vskip 10pt
{\bf Silvia Heubach}\\
{\smallit Department of Mathematics, Cal State LA, Los Angeles, USA}\\
{\tt sheubac@mcalstatela.edu}\\ 
\end{center}
\vskip 20pt

\centerline{\bf Abstract}

\noindent We introduce the game \ruleset{Slow$A$-Nim} which generalizes a number of recently studied games. \ruleset{Slow$A$-Nim} is played on $n$ stacks of tokens, and the set $A$ indicates the number of stacks a player can play on.  Once a player has decided on the number $a$ of stacks, s/he will select any $a$ stacks and then remove one token from each stack. The last player to move wins. We give results on the \P-positions of \ruleset{Slow$A$-Nim} for several infinite families. The results for $A = \{n-1\}$, which is the game \ruleset{Slow Exact $k$-Nim} for $k=n-1$ extend recent results for small values of $n$. The other two families,  $A=\{n-1,n\}$ and  $A=\{1,n\}$ have not been previously studied. The \P-positions for  $A = \{n-1\}$ and $A = \{n-1,n\}$ are closely related and have a very elegant description in terms of reduced positions, that is, positions for which unplayable tokens are disregarded. We also provide some general results that will be useful in the study of other sets $A$.\\

\noindent{\bf Keywords}: Combinatorial games,  Slow Exact $k$-Nim,  Variant of Nim,   $\P$-positions.

\section{Introduction}\label{intro}

The game of \Nim, introduced and solved in 1901 by  Charles L. Bouton~\cite{Bouton:1901} is a two-player game on  $n$ stacks of tokens. The players alternate taking one or more tokens from a single stack, and the player who cannot make a move loses. \Nim~is an impartial combinatorial game, where all possible moves and positions of the game are known (there is no randomness) and  both players have the same moves available from a given position (unlike in Chess). In impartial combinatorial games, positions fall into one of two outcome classes, the $\N$- and the $\P$-positions. Assuming optimal play by both players, a player can win if starting from an $\N$-position (by always moving to a \P-position), while a player starting in a \P-position is bound to lose against a wise adversary.  Therefore, knowing the set of $\P$-positions suffices to determine who will win the game and what the winning moves are. The $\P$-positions are characterized by the well-known property of finite impartial games, namely that there is no move from a \P-position to another \P-position, and that there is at least one move from every \N-position to a \P-position. Alternatively, \P-positions are those positions that have a Sprague-Grundy value of zero. For these and other basic properties of impartial combinatorial games see, for example,\cite{AlbNowWol2019} or \cite{WinningWays}. 

The game of \Nim~started the field of combinatorial games, and due to its central role in impartial games, there are a large number of variations of \Nim. We will mention only those that are directly relevant to our results, namely \ruleset{Moore's} $k$-\Nim \cite{Moo910} and \ruleset{Exact} $k$-\Nim ~\cite{BGHMM15}. In the first game,  a player is allowed to play on up to $k$ stacks, while in the second game the player has to play on exactly $k$ stacks. In each case, play means removing at least one token from among the selected stack(s), and as many as all tokens. 
Recently, slow versions of these two games were investigated. In the slow version of a game, exactly one token is removed from the selected stack(s). We will study a generalized version of these slow \Nim~games, namely the game \ruleset{Slow$A$-Nim}, and extend and complement prior results.\\

\begin{definition}
    The game \ruleset{Slow$A$-Nim} is played on $n$ stacks of tokens, where the set $A \subset \{1,2, \ldots,n\}$ determines on how many stacks a player is allowed to play. In each move, a player selects  $a\in A$  stacks and then removes exactly one token from each of the selected stacks. The number of stacks being played on can change from move to move and from player to player. Note that even when a stack is reduced to zero tokens, thus reducing the number of playable stacks,  we will still regard it as a stack, so the value of $n$ remains fixed for a particular game.  We assume the normal play convention, where the last player able to make a move wins.
\end{definition} 

Clearly, \SAN~with $A=\{1,\ldots,k\}$ describes the slow version of \ruleset{Moore's} $k$-\Nim, while $A=\{k\}$ describes the slow version of \ruleset{Exact} $k$-\Nim. Of these two games, \ruleset{Slow Exact} $k$-\Nim~has been studied more extensively, both in the normal and mis\`ere  (last player to move loses) versions. Here is a short summary of what is known about these two games.
  
\begin{itemize}
    \item  \ruleset{Slow Exact $k$-Nim}: The simplest families of \ruleset{Slow Exact $k$-Nim} games are those where $k=1$ or $k=n$, which are trivial as they have a fixed number of moves, and the Sprague-Grundy values toggle between $0$ and $1$ in each move.  For $k=2$, explicit formulas for the Sprague-Grundy values for normal and mis\`ere play were found for $n=2,3$ in \cite{GHHC20}, and an explicit formula for the \P-positions was proved for $n=5,6$ in \cite{ChiGurKno2021}. Additional results have been obtained by Gurvich and differing sets of co-authors, who have  provided an optimal strategy for \ruleset{Slow Exact $(n-1)$-Nim}  for both normal play \cite{GMMV2023} and mis\`ere play \cite{GMMN2023}. They show that their $M$-rule, namely omitting a maximal stack if all stack heights are odd and omitting a smallest even stack otherwise, is optimal in both cases.  They also extend prior results by giving an explicit description of the $\P$-positions for $k=3$ and $n=4$ under the normal play convention in \cite{GMMV2023}. Table~\ref{tab:knownSEkNim} summarizes what is known for this game, where results highlighted in the same color can be found in the same article.

 \begin{table}[!hbt]
$$
\begin{array} {c|cccccc}
n \backslash k&2 & 3& 4&5 &6 &\cdots \\ \hline
3 & \cellcolor{green!25}  G/ G^- & & & & & \\
4 &  \cellcolor{green!25} G/ G^- & \cellcolor{orange!75} \P\text{-pos} & & & & \\
5 &  \cellcolor{blue!25} \P\text{-pos} &?&\cellcolor{orange!75}M\text{-rule}   & & & \\
6 &  \cellcolor{blue!25} \P\text{-pos} & ? & ? &\cellcolor{orange!75}M\text{-rule} && \\
\vdots & ? &\vdots &\vdots & \ddots& \ddots& \\
\end{array}
$$
\caption{Known results for non-trivial ($1<k<n$) \ruleset{Slow Exact} $k$-{\Nim} games for the respective combinations of $n$ and $k$ values: $G/G^-$ indicates  Sprague-Grundy values for normal and mis\`ere play,  $M$-rule indicates winning strategy in normal (and mis\`ere) play, and \P-pos indicates explicit formulas for the \P-positions under normal play. }
\label{tab:knownSEkNim}
\end{table}

\item \ruleset{Slow Moore's} $k$-\Nim~(play on at most $k$ stacks):  Besides the Sprague-Grundy values for the trivial families when $k=1$ or $k=n$, Gurvich et al.~\cite{GHHC20} gave Sprague-Grundy values for $k=2$ and $n = 2,3$ and characterized the \P-positions for the infinite families when $k \in \{ n-2, n-1\}$ and for the specific games when $k=n-3$ and $n=5,6$. 
\end{itemize} 
 
What makes the \SAN~games different from the usual variations of \Nim~is that the game may end while there are still tokens available, namely when the number of non-zero stacks is smaller than $a=\min(A)$.  This makes finding the \P-positions more difficult, which is evident in Table~\ref{tab:knownSEkNim}:  the \P-positions  of \ruleset{Slow Exact $k$-Nim} are known only for small values of $n$ and $k$. We overcome this difficulty by establishing equivalence of \SAN~with a game played on reduced positions with adjusted moves. Our main results are as follows:
\begin{itemize}
    \item We provide a partial structure result on the {\bf \P-positions} of  \SAN~for {\bf general} $\bm{A}$ that highlights the relevance of the number of stacks with an odd stack height {\bf (Proposition~\ref{prop:Game_n_A_Strong})}, then  derive the {\bf \P-positions} of  \SAN~with $\bm{A=\{1,k\}}$ for all odd values of $k$ and some even values of $k$ {\bf(Theorems~\ref{thm:1k}} and~{\bf \ref{thm:12_some_}}) (see Section~\ref{sec:prelim}). These are  games in which there are no unplayable tokens.
    \item We make precise the notions of unplayable tokens and reduced positions (those without unplayable tokens at the end of the game) and establish an easy-to-check criterion that is necessary and sufficient for a position to be reduced: the {\bf NIRB condition (Theorem~\ref{thm:NIRB})} (see Section~\ref{sec:red}).
   \item  We establish the equivalence to a game that consists of only reduced positions  and modified moves that result in reduced positions after the move {\bf(Theorem~\ref{thm:GGcorr})}, and provide the necessary tools to play in the equivalent game (reduction of initial position and adjustments after a move) (see Section~\ref{sec:red}). 
    \item We determine the {\bf \P-positions} of \ruleset{Slow Exact $(n-1)$-Nim}, that is, \SAN~with $\bm{A=\{n-1\}}$ {\bf (Theorem~\ref{thm:P-pos_ex_k-1})} (see Section~\ref{sec:P-pos}), which complements the results in~\cite{GMMV2023} on the $M$-rule for this game. It may seem unnecessary to know an exact description of the \P-positions when a winning strategy is known. However, the \P-positions of this game not only exhibit a nice geometric structure, they also allow us to derive the \P-positions of another family of \ruleset{Slow$A$-Nim} games. 
    \item  We provide a tool for deriving the \P-positions of a game from the known \P-positions of a closely related game {\bf (Proposition~\ref{prop:GameExtended})} and use it to prove the {\bf \P-positions} of \SAN~with $\bm{A=\{n-1,n\}}$, where play is allowed on {at least $n-1$ stacks}  {\bf (Theorem~\ref{thm:P-pos_ex_AL})} (see Section~\ref{sec:P-posMG}). This game is  an instance of a family of games that form a counterpart to \ruleset{Slow Moore's $k$-Nim}, where play is allowed on at most $k$ stacks. These complimentary games have not previously been studied, and we have a conjecture for the \P-positions of \SAN~with $A=\{k,\dots,n\}$ {\bf (Conjecture~\ref{con:atleastk})}.
\end{itemize}
We start by introducing our notation in the next section  and conclude with open questions and a conjecture in Section~\ref{sec:open}.

\section{Notation and First Results}\label{sec:prelim}

For ease of readability, we will denote the game \ruleset{Slow$A$-Nim} by $\SN{n}{A}$. If the set $A$ is a singleton, we will use  $\SN{n}{i}$ instead of $\SN{n}{\{i\}}$. Throughout,  $\p=(p_1,\dots,p_n)$ denotes a position in the game, where $p_i$ is the height of the $i^{\text{th}}$ stack. Since the order of the stacks does not matter, we assume that any position is in \emph {canonical form}, that is, ordered with non-decreasing stack heights, so $p_1=\min(\p)$ and $p_n=\max(\p)$. An \emph{option} of a position $\p$ is a position that can be reached by a legal move from $\p$. We will denote it by $\p'=(p_1',\dots,p_n')$. We also use $\sp$ to denote the sum of the stack heights of position $\p$, and $\P_{n,k}$ to denote the $\P$-positions of the game \SN{n}{k}. For a more general set $A$, we denote the \P-positions by $\P_{n,A}$ and let $a=\min(A)$.

The games \SN{n}{A} fall into two categories, namely games for which $1 \in A$ and those where $1 \notin A$. Games in the first category will never have any tokens left at the end of the game, and $(0,\dots,0)$ is the only terminal position. By  contrast, games in the second category have multiple terminal positions - any position with fewer than $a=\min(A)$ non-zero stacks is terminal. These games will be the focus of Section~\ref{sec:red}. 

We first give a partial structure result on the characteristics of the \P-positions of games \SN{n}{A} that applies to both categories of \SN{n}{A} games. While only a partial result, Proposition~\ref{prop:Game_n_A_Strong} gives an indication that the number of odd stacks will play an important  role in the descriptions of the \P-positions of \SN{n}{A} games.

\begin{proposition} (Partial Structure Result) \label{prop:Game_n_A_Strong}
Let $\N$ and $\P$ be the set of \N- and \P-positions of the game \SN{n}{A} and let $o(\p)$ be the number of odd stacks of a position $\p$. 
\begin{enumerate}
    \item \label{it:Game_n_A_Strong1} If $o(\p)=0$, then $\p\in \P$;
    \item \label{it:Game_n_A_Strong2} If $o(\p) \in A$, then $\p\in \N$.
\end{enumerate}
\end{proposition}

\begin{proof}
We proceed by induction on the total number of tokens. If $\sp = 0$, then $\p$ is the terminal position, so the base case is proved. Now assume the statements are true for any $\p'$ with $\spp <\sp$. First assume that $o(\p)=0$.  Then  play on  $a\in A$ stacks leads to a position $\p'$ with $o(\p')=a \in A$, so $\p' \in \N$. On the other hand, if $\p \in\N$ with $o(\p)=a$, then playing on the $a$ odd stacks will lead to $\p'$ with $o(\p')=0$, so $\p' \in \P$. This completes the proof.
\end{proof}

 We now present results on \SN{n}{A} games with $A=\{1,k\}$. Theorem~\ref{thm:1k} gives results when $k$ is odd (for any $n$) or $k=n$ is even, while Theorem~\ref{thm:12_some_} gives results on the specific games when $A=\{1,2\}$ and $n=3,\dots,5$.  So far, no general pattern for the family $\SN{n}{\{1,2\}}$ has emerged.

\begin{theorem}\label{thm:1k} For the game \SN{n}{A} with $A=\{1,k\}$ one has:
\begin{enumerate} 
\item When $k$ is odd, then $\P_{n,A}=\{ \p \mid \sp \textrm{ is even}\}$, for any $n$.
\item When $k=n$ is even, then $\P_{n,A}=\{ \p \mid \textrm{both } \sp \textrm{ and the minimal stack } p_1  \textrm{are even}\}$.
\end{enumerate}
\end{theorem}

\begin{proof} \hfill
\begin{enumerate} 
\item If $k$ is odd, then each move, whether play is on one or on $k$ stacks (when possible),  changes the parity of $\sp$. Since the terminal position $(0,\ldots, 0)$ has the even sum  $\sp=0$, we have proved the claim.
\item Suppose now that $k=n$ is even. If $\p \in \P_{n,A}$, then playing on one stack will change the parity of the sum. If play is on all stacks, then it changes the parity of the minimal element $p_1$, so in both cases, $\p' \notin \P_{n,A}$.
Suppose now that $\p \notin \P_{n,A}$. We want to show that there is a move to a position $\p' \in \P_{n,A}$. Suppose that $\sp$ is even, and hence the minimal stack must be odd. This implies that one can play on all stacks, which changes the parity of the minimal stack, but not the parity of the sum, as $n$ is even, thus  $\p' \in \P_{n,A}$. If both  $\sp$ and $p_1$ are odd, playing on a minimal stack will change the parity of both the minimal stack and the sum. If  $\sp$ is odd and $p_1$ is even, then playing  on any stack that is not minimal will change only the parity of the sum. In both cases,  $\p' \in \P_{n,A}$. The only problematic case would be that all stacks are minimal and odd, but this cannot happen because $n$ is even, so $\sp$ would be even. \qedhere
\end{enumerate}
\end{proof}

\begin{theorem} \label{thm:12_some_}
 Let $o$ and $e$ denote whether an individual stack is odd or even and assume that $\p$ is in canonical form with non-decreasing stack heights. For the game \SN{n}{A} with $A=\{1,2\}$ we have
\begin{enumerate} 
\item $\P_{3,A}=\{\p \mid \p=(e,e,e) \textnormal{ or } \p=(o,o,o)\}.$
\item $\P_{4,A}=\{\p \mid \p=(e,e,e,e) \textnormal{ or } \p=(e,o,o,o)\}.$
\item $\P_{5,A}=\{\p \mid \p=(e,e,e,e,e) \textnormal{ or } \p=(e,e,o,o,o) \textnormal{ or }  \p=(o,o,e,e,o) \textnormal{ or } \p=(o,o,o,o,e)\}.$
\end{enumerate}   
\end{theorem}

\begin{proof} Note that the results for $n=3$ follow from those of $n=5$ with $p_1=p_2=0$ and those for $n=4$ follow by setting $p_1=0$. We now prove the result for $n=5$. Let $S=\P_{5,A}$. First we show there is no move from $S$ to $S$. We know from Proposition~\ref{prop:Game_n_A_Strong} that positions $\p$ with $o(\p)=0$ are in \P~and those with $o(\p)\in\{1,2\}$ are in $\N$, so we just need to consider positions $\p\in S$ with $o(\p)=3$ and $o(\p)=4$. 
\begin{itemize}
    \item $o(\p)=3$: If play is on one or two odd stacks, then $o(\p') \in \{1,2\}$. If play is on one even stack, then $o(\p')=4$ and  $p_5'=p_5$ is odd. If play is on two even stacks, then $o(\p')=5$. In all of these cases, $\p' \notin S$. If play is on one odd and one even stack, then $o(\p')=3$. If $\p=(e,e,o,o,o)$, then exactly two of $p_3', p_4'$ and $p_5'$ are odd. If $\p=(o,o,e,e,o)$, then either $p_5'$ is even or only one of $p_1'$ and $p_2'$ is odd. In either case, $\p' \notin S$, so there is no move from $\p$ with $o(\p)=3$ to $\p' \in S$. 
    \item $o(\p)=4$: If play is on the even stack (and possibly a second, odd stack), then $o(\p')=5$ or $o(\p')=4$ and $p_5'$ is odd. If play is on a single odd stack, then $o(\p')=3$ and $p_5'$ is even. Finally, if play is on two odd stacks, then $o(\p')=2$. In all cases,  $\p' \notin S$.
\end{itemize}
 Therefore, there is no move from $S$ to $S$. Next we need to show that from any $\p \notin S$, we have a move to $\p' \in S$. By Proposition~\ref{prop:Game_n_A_Strong}, we only need to consider positions with $o(\p) \in \{3,4,5\}$. If $o(\p)=5$, then play on $p_1$ and $p_2$ yields $\p'=(e,e,o,o,o)$. If $o(\p)=4$, then $p_5$ is odd. If the even stack is non-empty, then  play on both it and on $p_5$ leads to $\p'=(o,o,o,o,e)$. If the even stack is empty (and thus, equal to $p_1$), then play on $p_2$ to  obtain $\p'=(e,e,o,o,o)$. Finally we consider $o(\p)=3$ and distinguish according to the parity of $p_5$. 
 \begin{itemize}
     \item $p_5$ is even: If the second even stack is non-zero, play on that stack to obtain $\p'=(o,o,o,o,e)$. If the second even stack is zero, play on $p_2$ and $p_5$ to obtain $\p'=(e,e,o,o,o)$.
     \item $p_5$ is odd: Then there are four possibilities for the other two odd and two even stacks to be distributed: $(o,e,o,e,o)$, $(e,o,e,o,o)$, $(o,e,e,o,o)$,  and $(e,o,o,e,o)$. In the first and second case, play on $p_2$ and $p_3$, and in the third and fourth case, play on $p_2$ and $p_4$ leads to either $(o,o,e,e,o)$ or $(e,e,o,o,o)$.
 \end{itemize}
 Thus, there is a move from $\p\notin S$ to $\p' \in S$, which shows that the set of \P-positions is given by $S=\P_{5,A}$.
\end{proof}

We now turn our attention to the case where $1 \notin A$ and we need to deal with unplayable tokens.

\section{Reduced Positions and Playable Game Graph}\label{sec:red}

\subsection{Motivation}\label{subsec:motredpos}

\begin{example} \label{ex:redpos}
Suppose you play the game \SN{3}{2}. Then the positions $\p=(1,1,100)$ and $\tilde{\p}=(1,1,2)$ have the same game outcome - they are both \N-positions as the game is over after a single move (by playing on the two stacks of height one). Moreover, they have the same game tree of possible moves and the respective positions are of the same types. The extra $98$ tokens in $\tilde{\p}$ will never be played in this game, no matter which moves are made. 
\end{example}

How can we capture this idea of unplayable tokens? Imagine we draw the complete game graph and look at the terminal positions. For each stack, the smallest amount of tokens remaining in any of the terminal positions are unplayable no matter what moves are made.  This leads to the following definition. 

\begin{definition} Let $\T(\p)=\{t^{(1)}(\p),\ldots,t^{(r)}(\p)\}$ be the set of terminal positions of $\p$. Then for a given stack~$i$, the number of unplayable tokens is given by 
$$u_i(\p)=\min_j\{t^{(j)}_i(\p)\}.$$ The vector $u(\p)=(u_1(\p),\ldots,u_n(\p))$ represents the tokens that have no impact on the game outcome, while $r(\p)=\p-u(\p)$ represents the tokens that could be played in a sequence of legal moves, and hence are relevant for the game outcomes. A position with $\p=r(\p)$ is called {\em reduced} and  $r(\p)$ is the {\em reduction of $\p$}.
\end{definition} 

 Note that $r(\p)$ is unique since $u(\p)$ is unique. Also note that  positions with a common reduction are equivalent in terms of game outcome since they have the same playable tokens.  If a position $\p$ is reduced, then $u(\p)= (0,\ldots,0)$, which means that for each stack $i$ there is a sequence of legal moves that reduces that stack to zero, but not necessarily any of the other stacks. Furthermore,  if a position $\p$ is reduced and there is a position $\q \neq \p$ with $r(\q)=\p$, then there are infinitely many positions that reduce to $\p$.  Because $\q$ is not reduced, there is at least one stack with an unplayable token. Any additional tokens on this stack  will also be unplayable, without changing the playable tokens. Thus, any position so ``created" from $\q$ will also reduce to $\p$, which means there are infinitely many positions that reduce to $\p$.

\begin{example} \label{ex:redpos2}
In the game \SN{3}{2}, positions $\p=(1,1,m)$ with $m \ge 2$ have terminal positions $(0,0,m)$ and $(0,0,m-2)$.  Thus, $u(\p)=(0,0,m-2)$ and $r(\p)=\p-u(\p)=(1,1,2)$, making $(1,1,2)$ the only reduced position among positions $\p=(1,1,m)$ with $m \ge 2$. Having the common reduction $(1,1,2)$, these positions are equivalent with regard to game outcome because they have the same number of playable tokens in respective stacks.
\end{example}

 Note that $r(\p)$ has an important property that will allow us to reduce the game \SN{n}{A} to an equivalent game played on reduced positions. 

\begin{lemma} \label{lem:iso}
If $\p=(p_1,\ldots, p_n)$ is a position of \SN{n}{A} and $r(\p)$ is its reduction, then all sequences of moves that can be performed from $\p$ can also be performed from $r(\p)$,
and vice-versa.   
\end{lemma}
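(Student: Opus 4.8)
The plan is to show the two directions of the equivalence separately, and the key observation is that the ``unplayable'' vector $u(\p)$ is, coordinate-wise, the smallest amount of tokens that must remain in each stack at the end of \emph{every} play, so subtracting it does not remove any token that is ever touched. First I would set up notation: a ``sequence of moves'' from a position $\q$ is a finite list of positions $\q = \q^{(0)}, \q^{(1)}, \ldots, \q^{(m)}$ in which each $\q^{(j+1)}$ is obtained from $\q^{(j)}$ by a legal move of $\SN{n}{A}$, i.e.\ there is a set $S \subseteq [n]$ with $|S| \in A$, $q^{(j)}_i \ge 1$ for all $i \in S$, and $q^{(j+1)} = q^{(j)} - \mathbf{1}_S$. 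The claim is that such a list exists starting at $\p$ if and only if the list shifted by $-u(\p)$ (i.e.\ $\p - u(\p), \q^{(1)} - u(\p), \ldots$) is a legal sequence from $r(\p) = \p - u(\p)$, and conversely.

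For the direction ``$\p \Rightarrow r(\p)$'': given a legal sequence from $\p$ using move-sets $S_0, S_1, \ldots, S_{m-1}$, I would verify that the \emph{same} move-sets can be applied starting from $r(\p)$. The only thing that can fail is legality, i.e.\ some stack $i \in S_j$ might have height $0$ at step $j$ in the shifted sequence even though it had height $\ge 1$ in the original. So I need: for every $j$ and every $i \in S_j$, $q^{(j)}_i - u_i(\p) \ge 1$, equivalently $q^{(j)}_i \ge u_i(\p) + 1$. To see this, extend the given sequence arbitrarily to a terminal position $t$; then $t_i \ge u_i(\p)$ by definition of $u_i$ as a minimum over terminal positions, and since step $j$ removes one token from stack $i$ and the heights are non-increasing in each stack, $q^{(j)}_i \ge q^{(j+1)}_i + 1 \ge t_i + 1 \ge u_i(\p) + 1$. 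This is the crux of the argument, and it is the step I expect to carry the most weight — everything else is bookkeeping. One must be slightly careful that the extension to a terminal position really keeps stack $i$ at height $\ge 1$ right up through step $j$, which follows because a stack's height only decreases along a play.

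For the converse ``$r(\p) \Rightarrow \p$'': this direction is easier, since $\p \ge r(\p)$ coordinate-wise. Given a legal sequence from $r(\p)$ with move-sets $S_0, \ldots, S_{m-1}$, applying the same move-sets from $\p$ produces positions that are exactly the $r(\p)$-sequence plus the constant vector $u(\p)$; legality is preserved because adding a non-negative vector can only increase stack heights, so $S_j$-legality at step $j$ from $r(\p)$ implies $S_j$-legality at step $j$ from $\p$. I would state this as an explicit bijection between move-sequences from $\p$ and move-sequences from $r(\p)$, given by adding/subtracting the fixed vector $u(\p)$ at every position, and note that it preserves the move-set used at each step and hence the combinatorial structure (this is what makes the positions ``equivalent'' in the sense used informally in Examples~\ref{ex:redpos} and~\ref{ex:redpos2}). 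Finally I would remark that this lemma immediately gives that $\p$ and $r(\p)$ lie in the same outcome class, since the game trees are isomorphic, which is the use to which the lemma is put in Section~\ref{sec:red}.
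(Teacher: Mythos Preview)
Your proof is correct and follows essentially the same approach as the paper's: both directions hinge on the observation that the tokens counted by $u(\p)$ are never played, so subtracting them cannot spoil legality of any move sequence. Your treatment is a bit more explicit than the paper's --- you extend an arbitrary sequence to a terminal position to invoke the definition of $u_i(\p)$, whereas the paper only explicitly treats sequences that already end at a terminal position --- but the underlying argument is the same.
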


\begin{proof} A given sequence of moves from a position fails to be legal only if one of the moves requires subtraction of a token when the corresponding stack is empty.   Since by definition  $r_i(\p) \le p_i$ for any $i$,  any legal sequence from $r(\p)$ is legal from $\p$ also. Now assume  that there is a sequence of moves ending in a terminal position $t^{(j)}(\p)$ that can be performed from $\p$.  Then the number of times play is on stack $i$ is given by $p_i-t^{(j)}_i(\p)\le p_i-u_i(\p)=r_i(\p)$ by definition of $u_i(\p)$. Therefore, there are always enough tokens for play, so the move sequence is legal from $r(\p)$  also. 
\end{proof}

\subsection{The NIRB Condition}\label{subsec:NIRB}

Given the potential of reduced positions to simplify the analysis of the \P-positions, we need an easy way to determine whether a position is reduced or not. Theorem~\ref{thm:NIRB}  provides a powerful and simple means to do so and to determine $r(\p)$ when $\p$ is not reduced.  It is a core ingredient in the proofs of the main theorems.

\begin{theorem} \label{thm:NIRB}
Let $n \ge 3$ and $a=\min(A)$. A position $\p$ in game \SN{n}{A} is reduced if and only if it satisfies the NIRB (Nobody Is Really Big) condition\footnote{`Nobody' refers to `no stack', which would be more precise language, but the resulting abbreviation NSIRB does not roll off the tongue as easily.}, that is, 
\begin{equation*} \label{eq:NIRBcond}
p_n \leq \frac{\sp}{a}.
\end{equation*}
 We call $\left\lfloor \frac{\sp}{a}\right\rfloor$ the NIRB value of $\p$.
\end{theorem}

\begin{proof} 
We first prove the result when $A=\{a\}$ is a singleton.
Assume that the position $\p$ is reduced. Therefore, for each $i$, there is a sequence of $n_i$ moves from $\p$ to a position $\q$ with $q_i=0$. Since necessarily $n_i \ge p_i$ and each of these  $n_i$ moves decreases the number of tokens by $a$,  we have that $\sp \geq n_i\cdot a \geq  p_i\cdot a$ for all $i$, so $\p$ satisfies the NIRB condition. 

Next, we assume that the NIRB condition 
holds and show that the position is reduced.  We proceed by induction on the number of tokens \sp.  For $\sp<a$, we have that $p_n=0$ by NIRB, and thus $\p=(0,\dots,0)$, which clearly is reduced. 
Now assume that $\sp\ge a$, that $\p$ satisfies NIRB, and that any position $\p'$ with $\spp<\sp$ that satisfies NIRB is reduced.  Let $\alpha$ be the number of stacks of $\p$ that have maximal height and suppose $\alpha \leq a$. Define $m_0$ to be the move that plays on the $\alpha$ maximal stacks and on any $a -\alpha$ non-zero stacks and results in position  $ \p'$. Then $\spp=\sp-a$  and $\max(\p')=p_n-1.$ Thus, using that $\p$ satisfies NIRB, we have
$$a\cdot \max(\p') =a\cdot(p_n-1)\leq \sp-a = \spp,$$
so $\p'$ satisfies NIRB.  Thus, for every $i$ there exists a sequence of moves that leads from $\p'$ to a position $\q^{(i)}=(q_1,\dots, q_n)$ with $ q_i=0$ and  $q_j \ge 0$ for all $j$. Pre-pending the move $m_0$ to this sequence creates a move sequence from $\p$ to $\q^{(i)}$, hence $\p$ is reduced. 
Left to consider is the case where $\alpha > a$, so $\spp = \sp-a$  and $\max(\p')=p_n$.  If $\p'$ satisfies NIRB, then $\p$ is reduced as in the case where $\alpha \leq a$. If 
 $\p'$ does not satisfy NIRB, then
$ a \cdot \max(\p') > \spp=\sp-a$,  or equivalently, $a \cdot p_n +a > \sp$.
However, because there are $\alpha >a$ maximal stacks, 
$\sp \ge \alpha \cdot p_n\ge (a+1) p_n=a \cdot p_n+a,$
a contradiction. Thus, a position that satisfies NIRB is reduced when $A$ is a singleton. 

We now consider the case when $A$ is not a singleton. We need to show that $\p$ is reduced in game \SN{n}{A} if and only if $\p$ is reduced in \SN{n}{a}. If $\p$ is reduced in \SN{n}{a}, then it must be reduced in \SN{n}{A} also since $\{a\} \subset A$.  On the other hand, if $\p$ is reduced in \SN{n}{A}, then the move sequence  which leads to a position $\q^{(i)}$ with  $q_i=0$ and $q_j\ge 0$  may include moves that remove tokens from more than $a$ stacks. However, each such move can be replaced by a move that plays on stack $i$ and on any selection of $a-1$ of the other stacks that were played on in that move, leading to a position $\q'$. Stack $i$ will still be reduced to zero with these adjusted moves, and all other stack heights will satisfy $q_j'\ge q_j\ge 0$, so the adjusted move sequence is legal. Thus, $\p$ is  reduced in \SN{n}{a}, which completes the proof.
\end{proof}

\subsection{Associated Playable (Reduced) Game}\label{subsec:Playable}

Since unplayable tokens do not affect the game outcome, we want to remove them from consideration. In Example~\ref{ex:ghost} we illustrate how to arrive at an equivalent reduced game graph that eliminates the unplayable tokens. Note that playable tokens can become unplayable during game play but not vice versa:  for any option $\p'$ of $\p$ we have  $\T(\p')\subset \T(\p)$, thus  $u_i(\p')\ge u_i(\p) $ for all $i$.

\begin{example} \label{ex:ghost}
Consider the position $\p=(1,2,5,6)=(a,b,c,d)$ in game \SN{4}{3}, which is not reduced as $p_n\cdot 3 = 18 \nleq 14=\sp$; its reduction is $r(\p)=(1,2,3,3)$. On the left of Figure~\ref{fig:game_eq} we show the game tree $\mathcal{G}$ of $\p$, where labels on the edges identify the stack that is not played on. Playable tokens are shown in black and unplayable ones are in light gray. Newly  unplayable tokens are shown in blue (dark gray) when they become unplayable, then change to light gray in subsequent options. The playable game tree $\tilde{\mathcal{G}}$ (on the right) is obtained by replacing each position by its reduction. The allowed moves in $\tilde{\mathcal{G}}$ are the legal moves of \SN{n}{A} followed by reduction if needed. Reduction is needed whenever newly unplayable tokens occur as a result of a move. Moves that require the adjustment are indicated with a tilde in the playable game graph.  
\end{example}

\begin{figure}[!hbt]
    \centering
    \includegraphics[width=.9\linewidth]{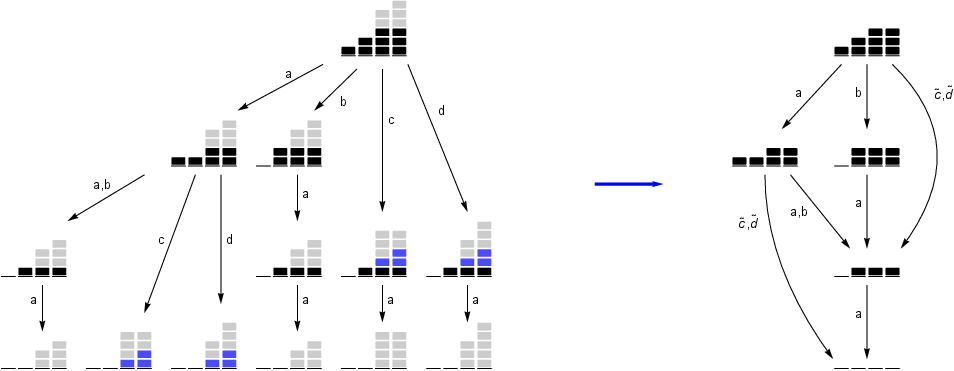}
    \caption{Mapping of game tree $\mathcal{G}$ of $\p=(1,2,5,6)$  (on the left) to its playable (reduced) game graph $\tilde{\mathcal{G}}$ (on the right). In $\mathcal{G}$, tokens corresponding to $r(\p)$ are shown in black, and unplayable tokens are shown in light gray. Newly unplayable tokens are shown in blue (dark gray) in the position where they become unplayable, and in light gray thereafter. In $\tilde{\mathcal{G}}$, moves of $\SN{n}{A}$ are replaced by the respective moves followed by reduction (if necessary). Moves where reduction of $\p'$ is needed due to newly unplayable tokens are indicated with a tilde. }
    \label{fig:game_eq}
\end{figure}

\begin{definition}
    The \emph{playable} or \emph{reduced game graph} $\tilde{\mathcal{G}}$ of game \SN{n}{A} consists of the reduced positions $r(\p)$ and adjusted moves $\tilde{\mathbf{m}}:=\mathbf{m}+\Delta u(\p')$, where $\Delta u(\p')=r(\p)-\mathbf{m}-r(\p')$ accounts for the newly unplayable tokens (if any) of $\p'$. We will denote the \emph{reduced game} played on $\tilde{\mathcal{G}}$ by \SNr{n}{A}. 
\end{definition}

By Lemma~\ref{lem:iso}, the possible move sequences from $\p$ and $r(\p)$ are identical, that is, the respective game trees from each of these positions are isomorphic. Thus, the outcome class of $\p$ is that of $r(\p)$ so we can use the playable game graph to determine the outcome class of any position.  This gives us the following theorem.

\begin{theorem} \label{thm:GGcorr} (Game Graph Correspondence) Let $\tilde{\mathcal{G}}$ be the playable game graph of \SN{n}{A} and let $\tilde{\P}$ and $\tilde{\N}$ be the sets of \P- and \N-positions, respectively, of the game defined by $\tilde{\mathcal{G}}$. For a position $\p$ of \SN{n}{A},   $\p \in \P_{n,A}$ only if $r(\p) \in \tilde{\P}$ and $\p \in \P_{n,A}^c$ only if $r(\p) \in \tilde{\N}$.
    
\end{theorem}

\begin{corollary}\label{cor:same_red}
    If \p~and \q~have the same reduction, that is, $r(\p)=r(\q)$, then  \p~and \q~are in the same outcome class.
\end{corollary}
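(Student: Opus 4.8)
The plan is to deduce this directly from the Game Graph Correspondence, Proposition~\ref{prop:GGcorr}. That proposition only gives one-directional implications: $\p \in \P_{n,A}$ \emph{only if} $r(\p) \in \tilde{\P}$, and $\p \in \P_{n,A}^c$ \emph{only if} $r(\p) \in \tilde{\N}$. The first thing I would point out is that these two implications can be upgraded to a biconditional. Indeed, every position of \SN{n}{A} lies in exactly one of $\P_{n,A}$ and $\P_{n,A}^c$, and every position of the playable game graph $\tilde{\mathcal{G}}$ lies in exactly one of $\tilde{\P}$ and $\tilde{\N}$; since $r(\p)$ is a position of $\tilde{\mathcal{G}}$, the two ``only if'' statements taken together force $\p \in \P_{n,A}$ if and only if $r(\p) \in \tilde{\P}$.

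With this equivalence in hand the corollary is immediate: if $r(\p)=r(\q)$, then $\p \in \P_{n,A}$ iff $r(\p)\in\tilde{\P}$ iff $r(\q)\in\tilde{\P}$ iff $\q \in \P_{n,A}$, so $\p$ and $\q$ have the same outcome class. As a sanity check one can also argue from Lemma~\ref{lem:iso} alone, bypassing the playable game graph: the lemma shows that the sets of legal move sequences from $\p$, from $\q$, and from their common reduction $r(\p)=r(\q)$ all coincide (recording a move sequence as the list of subsets of stacks played on). One then shows by induction on the length of the longest such sequence that the rooted game trees of $\p$ and $\q$ are isomorphic as impartial-game trees, using that for a move $\mathbf{m}$ legal from both, the sequences from $\p-\mathbf{m}$ and from $\q-\mathbf{m}$ are exactly the $\mathbf{m}$-tails of the sequences from $\p$ and $\q$, whence $r(\p-\mathbf{m})=r(\q-\mathbf{m})$ and the inductive hypothesis applies; isomorphic game trees share an outcome class.

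There is no genuine obstacle here. The only step that requires a moment's care is the passage from the one-way implications of Proposition~\ref{prop:GGcorr} to the biconditional, which relies on both pairs of sets being exhaustive and mutually exclusive; everything else is bookkeeping, and I would keep the write-up to a few lines using the first argument.
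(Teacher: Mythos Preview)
Your argument is correct and matches the paper's intent: the corollary is stated there without a separate proof, as an immediate consequence of Proposition~\ref{prop:GGcorr}. Your observation that the two one-sided implications combine into a biconditional (since $\{\P_{n,A},\P_{n,A}^c\}$ and $\{\tilde{\P},\tilde{\N}\}$ are each partitions) is exactly the small step the paper leaves implicit, and the alternative route via Lemma~\ref{lem:iso} is also sound but unnecessary here.
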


Now that we have established that we can determine the outcome class of a position $\p$ in \SN{n}{A} from the outcome class of its reduction $r(\p)$, we will analyze the game \SNr{n}{A}. It is important to note this fact because the characteristics of the \P-positions are valid for the reduced position $r(\p)$, whose characteristics, such as parity of total number of tokens, number of odd stacks, etc., may differ from those of the unreduced position $\p$. To utilize the playable game graph $\tilde{\mathcal{G}}$ and to analyze the game $\SNr{n}{A}$, we need to 
\begin{enumerate}
    \item check whether the initial position is reduced, and if not, find its reduction; 
    \item determine which moves need adjustment and find the form of $r(\p')$. 
\end{enumerate}

\subsection{Initial Reduction}\label{subsec:init_red}
 The most obvious way to find the reduction of an initial  position $\p$ is to check whether $\p$ satisfies the NIRB condition, and if not, to iteratively decrease the maximal value. Such an algorithm will have order $O(\sp)$ because just one token may be eliminated in each iteration. We will discuss a more efficient algorithm that starts from the general structure of the reduced position and then performs a sequence of checks to determine the maximum stack height of the reduced position. An important ingredient is the function ``chop the top" $ctt(\p,m)$, which is defined as $ctt(\p,m)_i=\min(p_i,m)$. Note that  $\Sigma(ctt(\p,m))\le \sp$ for any $m$. 
 Before we present the algorithm, we establish some properties of $ctt(\p,m)$ that will ensure the algorithm arrives at the reduced position.

\begin{lemma} \label{lem:ctt_red} In the game \SN{n}{A}, if $\p$ is reduced, then $ctt(\p,m)$ is reduced for any value of $m$.
\end{lemma}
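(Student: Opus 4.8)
The plan is to use the NIRB characterization from \tref{thm:NIRB} (and \tref{thm:NIRB_A} for general $A$) to turn the statement into a purely arithmetic inequality. Since $\p$ is reduced in \SN{n}{A}, \tref{thm:NIRB_A} gives $a\cdot p_n\le \sp$ where $a=\min(A)$; I want to conclude that $a\cdot\max(ctt(\p,m))\le \Sigma(ctt(\p,m))$. Write $\q=ctt(\p,m)$, so $q_i=\min\{p_i,m\}$. The key point is that chopping the top affects the maximum and the sum in a controlled, coordinated way: every unit removed from some stack that exceeds $m$ is removed because that stack is at least as tall as every other, so it also decreases $\max(\p)$ whenever $\max(\p)>m$.

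First I would dispose of the easy case: if $m\ge p_n=\max(\p)$, then $ctt(\p,m)=\p$ and there is nothing to prove. So assume $m<p_n$. In that case $\max(\q)=m$, since every stack of height $>m$ is truncated to exactly $m$ and there is at least one (namely stack $n$). Now I need $a\cdot m\le \Sigma(\q)$. The clean way to see this is to compare $\q$ with $\p'=ctt(\p,p_n-1)$, i.e. chop down by one level at a time, and argue by induction on $p_n-m$. One chop from level $\ell$ to level $\ell-1$ (with $\ell>m$, so $\ell\ge 2$ if $a\ge2$, but we don't even need that) removes one token from each stack of height $\ell$; there are $\alpha\ge 1$ such stacks, and these are among the tallest, so the new maximum is $\ell-1$. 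Reducedness of $\p$ at this level — which propagates because, as shown implicitly in the proof of \tref{thm:NIRB}, $ctt$ of a reduced position stays reduced as long as we only need the inequality — gives $a\cdot\ell\le\Sigma$ before the chop, and after removing $\alpha$ tokens we need $a(\ell-1)\le\Sigma-\alpha$, i.e. $\alpha\le\Sigma-a\ell+a\le a$. So the induction goes through precisely when $\alpha\le a$, and when $\alpha>a$ we instead use $\Sigma\ge\alpha\ell\ge(a+1)\ell=a\ell+\ell\ge a\ell+\alpha$ wait — more carefully, $\Sigma\ge\alpha\ell$ and $\alpha>a$ forces $\Sigma-\alpha\ge\alpha(\ell-1)\ge a(\ell-1)$ since $\ell\ge 1$ (indeed $\ell>m\ge 0$). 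Either way the NIRB inequality is preserved by a single-level chop, hence by iteration down to level $m$.

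Alternatively — and this is probably the cleanest writeup — I would avoid the induction entirely and argue directly. Let $S=\{i: p_i>m\}$ and $s=|S|\ge 1$. Then $\Sigma(\q)=\sp-\sum_{i\in S}(p_i-m)$ and $\max(\q)=m$. I need $a\cdot m\le \sp-\sum_{i\in S}(p_i-m)$. Using $\sp\ge a\cdot p_n=a\cdot p_j$ for any $j\in S$ (all stacks in $S$ have height at least $m+1$; more to the point $p_n$ is the max), together with the simple observation that each term $p_i-m\le p_n-m$, one gets $\sum_{i\in S}(p_i-m)\le s(p_n-m)$. The case split on whether $s\le a$ or $s>a$ then closes it: if $s\le a$, use $\Sigma(\q)\ge$ (sum over the $s$ chopped stacks, now each $=m$, i.e. $sm$) $+$ (the rest), and the rest is nonnegative, giving $\Sigma(\q)\ge$ ... hmm, that only gives $sm$, not $am$. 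So the honest route really does need the NIRB hypothesis, not just nonnegativity, and the case $s\le a$ is where I spend the NIRB inequality on $\p$ while the case $s>a$ is essentially free from $\Sigma(\q)\ge s\cdot m\ge a\cdot m$ (wait, when $s>a$ and $\max(\q)=m$, we have $\Sigma(\q)\ge sm\ge (a+1)m\ge am$ — that case is free).

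The main obstacle, then, is the case where only a few stacks get chopped ($s\le a$): there one must genuinely exploit that $\p$ was reduced, carefully tracking that removing $\sum_{i\in S}(p_i-m)$ tokens while the maximum drops from $p_n$ to $m$ cannot violate $a\cdot\max\le\Sigma$. I expect the single-level-chop induction to be the most transparent presentation of this, since at each step the bound ``number of maximal stacks $\le a$ or else $\Sigma$ is large enough'' is exactly the dichotomy already used in the proof of \tref{thm:NIRB}. I would write it that way, then remark that the general-$A$ statement follows immediately from the singleton case via \tref{thm:NIRB_A}.
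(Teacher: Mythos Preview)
Your proposal is correct, and the approach you ultimately settle on (single-level-chop induction) differs from the paper's direct argument. Both proofs open identically: invoke the NIRB characterization from \tref{thm:NIRB_A}, dispose of $m\ge p_n$, and split cases by comparing $a$ to the number of stacks exceeding $m$. Your observation that when $s\ge a$ stacks exceed $m$ (you wrote $s>a$, but $s=a$ is equally free) one has $\Sigma(\q)\ge sm\ge am$ is exactly the paper's case $a\le n_2$.

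For the remaining case the paper does \emph{not} induct; it finishes with a two-line contradiction you did not find. From $a>n_2$ one has
\[
a(p_n-m)>n_2(p_n-m)\ge\sum_{i:\,p_i>m}(p_i-m),
\]
and assuming $\q$ fails NIRB, i.e.\ $am>\Sigma(\q)=\sum_{i:\,p_i\le m}p_i+n_2 m$, one adds the two inequalities to obtain $a\,p_n>\sp$, contradicting NIRB for $\p$. This spends the hypothesis on $\p$ in a single shot. Your level-by-level induction is a valid alternative and, as you note, reuses the $\alpha\le a$ versus $\alpha>a$ dichotomy from the proof of \tref{thm:NIRB}; but it is longer and requires carrying the NIRB inequality through every level. (A small slip in your inductive step: the chain should read $\alpha\le a\le \Sigma-a\ell+a$, not $\alpha\le\Sigma-a\ell+a\le a$.)
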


\begin{proof}
If $m \ge p_n=\max(\p)$, then $ctt(\p,m)=\p$, and nothing is to be proved. If, on the other hand, $m < p_n$, then there must be  $n_1\ge 0$ stacks that are of height at most $m$ and $n_2\ge 1$  stacks that have more than $m$ tokens, with $n_1+n_2=n$. 
Let $\q=ctt(\p,m)$. Because $\p$ is reduced, we have 
\begin{equation}\label{eq:pNIRB}
    a \cdot p_n \leq \Sigma(\p)=\sum_{i=1}^{n_1}p_i+\sum_{i=1}^{n_2}p_{(n_1+i)}.
\end{equation}
 To show that $\q$ is reduced we need that 
    $a\cdot m \leq \Sigma(\q)=\sum_{i=1}^{n_1}p_i+ n_2 \cdot m$.
If $a \le n_2$, then $a\cdot m \le n_2 \cdot m \le \sum_{i=1}^{n_1}p_i+ n_2 \cdot m=\Sigma(\q).$
If $a >n_2$, then 
\[a(p_n-m)>n_2(p_n-m)\ge \sum_{i=1}^{n_2}(p_{(n_1+i)}-m)=\sum_{i=1}^{n_2}p_{(n_1+i)} -n_2 \cdot m,\] 
which, using Equation~\eqref{eq:pNIRB} implies that
\[a \cdot m \leq a \cdot p_n -\sum_{i=1}^{n_2}p_{(n_1+i)} +n_2 \cdot m \leq \sum_{i=1}^{n_1}p_i+n_2 \cdot m= \Sigma(\p),\]
so $\q$ is reduced. 
\end{proof}

\begin{lemma}\label{lem:redx=cttxm}
    In the game \SN{n}{A}, for any $\p$, $r(\p)=ctt(\p,m)$, where $m$ is the largest integer $ m\le p_n$ for which $ctt(\p,m)$ is reduced.
\end{lemma}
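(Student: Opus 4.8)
The plan is to exploit the two properties of the "chop the top" operator established in Lemmas~\ref{lem:iso} and~\ref{lem:ctt_red}, together with the fact that $r(\p)$ is itself reduced and is dominated by $\p$ componentwise. Write $M$ for the largest integer with $0 \le M \le p_n$ such that $ctt(\p,M)$ is reduced; note that this set of integers is nonempty since $ctt(\p,0)=(0,\ldots,0)$ is the terminal position and hence reduced, so $M$ is well defined. I want to show $r(\p) = ctt(\p,M)$, and I would do it by a two-sided comparison: first show $ctt(\p,M)$ has no unplayable tokens relative to $\p$ (so it "captures at least as much" as $r(\p)$), then show $r(\p)$ is of the form $ctt(\p,m)$ for some $m$, forcing $m \le M$.

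First I would observe that $r(\p) = ctt(\p, \max(r(\p)))$. Indeed, since $r_i(\p) = p_i - u_i(\p) \le p_i$ for every $i$, and since on every stack that is \emph{not} reduced to zero in the relevant terminal positions we in fact have $u_i(\p)=\min_j t_i^{(j)}(\p)$ chopping $\p$ down uniformly — here is the key point: $r(\p)$ differs from $\p$ only on stacks whose height exceeds some threshold. Concretely, one checks from the NIRB characterization (Theorem~\ref{thm:NIRB_A}) that a reduced position dominated by $\p$ and obtained by only lowering stack heights must agree with $\p$ on all stacks below its own maximum; hence $r(\p) = ctt(\p, m_0)$ with $m_0 = \max(r(\p))$. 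Since $r(\p)$ is reduced, $m_0$ belongs to the set defining $M$, so $m_0 \le M$, giving $r(\p) = ctt(\p,m_0) \le ctt(\p,M)$ componentwise.

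For the reverse inequality I would use Lemma~\ref{lem:iso} and the minimality built into the definition of $u(\p)$. Since $ctt(\p,M)$ is reduced and $ctt(\p,M) \le \p$, Lemma~\ref{lem:iso}'s argument shows that any terminal-reaching sequence from $ctt(\p,M)$ is legal from $\p$; but more directly, because $ctt(\p,M)$ is reduced, for each stack $i$ there is a legal sequence from $ctt(\p,M)$ — and hence from $\p$, since $ctt(\p,M)\le\p$ — that empties stack $i$ while keeping every other stack height at least as large as in $ctt(\p,M)$. This produces, for each $i$, a terminal position $t^{(j)}(\p)$ with $t_\ell^{(j)}(\p) \ge ctt(\p,M)_\ell$ for all $\ell \ne i$ and $t_i^{(j)}(\p)=0$. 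Wait — more carefully, I should track that these terminal positions witness $u_\ell(\p) \le \ldots$; taking the appropriate minimum over $i$ I conclude $u(\p) \le \p - ctt(\p,M)$, i.e. $r(\p) = \p - u(\p) \ge ctt(\p,M)$ componentwise. Combined with the previous paragraph, $r(\p) = ctt(\p,M)$.

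The main obstacle I anticipate is the claim that $r(\p)$ has the special form $ctt(\p,m_0)$ — that reduction only ever chops down the "tall" stacks and leaves the short ones untouched. This needs a genuine argument rather than an appeal to intuition: one must rule out the possibility that $u_i(\p) > 0$ for a stack $i$ that is not among the tallest. I would handle this by showing that if stack $i$ can be emptied by some legal sequence (which it can, whenever $p_i$ is "small" relative to $\sp/a$ in the sense guaranteed when $\p$ satisfies, or after chopping satisfies, NIRB), then in fact $u_i(\p)=0$; the quantitative version of this is exactly the content of Theorem~\ref{thm:NIRB_A} applied after an initial chop, and the monotonicity $\Sigma(ctt(\p,m)) \le \sp$ keeps the NIRB inequality under control as $m$ decreases. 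Everything else is bookkeeping with componentwise inequalities.
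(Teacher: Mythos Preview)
Your two-sided componentwise comparison is a genuinely different route from the paper's proof, which is essentially a two-line sketch: the paper invokes \lref{lem:ctt_red} to see that $\{m:ctt(\p,m)\text{ reduced}\}$ is a down-set and then simply asserts that ``reduction removes the minimal number of tokens required to make a position reduced,'' concluding immediately. Your approach is more explicit and does not rely on that unproven minimality principle, which is a real gain.

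That said, the step you flag as the main obstacle --- proving $r(\p)=ctt(\p,m_0)$ with $m_0=\max(r(\p))$ --- is indeed a gap in your write-up, and your sketch for handling it is vague. The good news is that you do not need the equality at all; the \emph{inequality} $r(\p)\le ctt(\p,m_0)$ is trivial (each $r_i(\p)\le p_i$ and $r_i(\p)\le m_0$), and then $ctt(\p,m_0)$ is reduced directly by NIRB: its maximum is $m_0$ and $\Sigma(ctt(\p,m_0))\ge\Sigma(r(\p))\ge a\cdot m_0$, the last inequality holding because $r(\p)$ is itself reduced (which follows from \lref{lem:iso}). Hence $m_0\le M$ and $r(\p)\le ctt(\p,m_0)\le ctt(\p,M)$. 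For the reverse inequality your idea is right, but the execution slips: playing from $\p$ the sequence that empties stack $i$ from $ctt(\p,M)$ lands you at $\p-ctt(\p,M)+q$ (with $q_i=0$), which has $p_i-ctt(\p,M)_i$ tokens in stack $i$, not $0$, and need not be terminal for $\p$; you must then \emph{continue} to a terminal position $t$ of $\p$, which can only lower stack $i$ further, yielding $u_i(\p)\le t_i\le p_i-ctt(\p,M)_i$ as desired. With those two fixes your argument goes through cleanly.
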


\begin{proof}
    By Lemma~\ref{lem:ctt_red}, if $ctt(\p,m)$ is reduced for $m$, then $ctt(\p,m')$ is reduced for $0 \le m' \le m$.  Since reduction removes the minimal number of tokens required to make a position reduced,  $r(\p)=ctt(\p,m)$, where $m$ denotes the largest integer for which $ctt(\p,m)$ is reduced.  
\end{proof}

We now derive the algorithm. By Lemma~\ref{lem:redx=cttxm}, we need to find the largest $m$ for which $ctt(\p,m)$ is reduced. If $m \in (p_i,p_{i+1}]$ for some $i=1,\ldots,n-1$, then $\q=ctt(\p,m)$ is reduced if $a\cdot m\leq \Sigma(\q)=\sum_{j=1}^{i}p_j+(n-i)\cdot m$, or equivalently, 

\begin{equation}\label{eq:red-cond-M}
    (a+i-n)\,m \le \sum_{j=1}^{i}p_j=:\Sigma_i(\p).
\end{equation}
The largest value of $m$ that satisfies Equation~\eqref{eq:red-cond-M} is $\left\lfloor\frac{\Sigma_i(\p)}{a+i-n}\right\rfloor$ for $a+i-n>0$, so let $m_i=\left\lfloor\frac{\Sigma_i(\p)}{a+i-n}\right\rfloor$. The algorithm first checks whether $\p$ is already reduced, and if not, iteratively checks whether $m_i\in (p_i,p_{i+1}]$ for $i=n-1, n-2,\ldots,$ until such an $m_i$ is found. Then $r(\p)=ctt(\p,m_i)$. Example~\ref{ex:comp-init-red-A2} illustrates the algorithm.

\begin{minipage}{.9\linewidth}
\begin{algorithm}[H]
\caption{Initial Reduction}
\label{alg:initred}
\begin{algorithmic}[1]
\Require $\p=(p_1,\dots,p_n)$ and $a$
\State Initialize an array of length $n+1$ named $s$
\State $s[0] \gets 0$
\For{$i=1,\dots,n$}
   \State $s[i] \gets s[i-1]+p_i$
\EndFor
\If{$p_n\cdot a \leq s[n]$}
    \State \Return $\p$
\EndIf 
\For{$j=1,\dots, a-1$}
   \State $m \gets \left\lfloor\frac{s[n-j]}{a-j}\right\rfloor $
   \If{$m \in (p_{n-j},p_{n-j+1}]$}
    \State Compute $ctt(\p,m)$
    \State \Return $ctt(\p,m)$
\EndIf   
\EndFor
\end{algorithmic}
\end{algorithm}
\end{minipage}
 \vspace{0.2in}

\vspace{0.1in}

\begin{example}\label{ex:comp-init-red-A2}
Let $\p=(12,20,33,52,79,112,155,17 0)$ and $a=5$. In Phase I, the algorithm computes the partial sums. In Phase II, it computes the potential max value $m=\left\lfloor\frac{s[n-j]}{a-j}\right\rfloor$ (which is not the NIRB value, except for $j=0$) and checks whether it is in the interval currently being considered.  Table~\ref{tab:Alg} shows the computations, where the case $j=0$ corresponds to the check whether the position is already reduced. The complexity of the algorithm  depends on the number of stacks $n$ in Phase I and on the value of $a \leq n$ in Phase II, so overall, the algorithm is of order $O(n)$, the number of stacks.

\renewcommand{\arraystretch}{1.75}
\begin{table}[!htb]
\centering
\begin{tabular}{|l|c|c|c|c|c|c|c|c|c|c|}
\hline
\multirow{2}{*}{Phase I}  & $i$             & 0  & 1  & 2   & 3   & 4   & 5   & 6   & 7   & 8   \\ 
\cline{2-11} 
 & $s[i]$  & 0    & 12   & 32  & 65       & 117 & 196 & 308 & 463 & 633 \\ \hline\hline
\multirow{4}{*}{Phase II} & $j$             & 0  & 1  & 2   & 3   &   &  &  &  & \\ \cline{2-11} 
 & $m=\left\lfloor\frac{s[8-j]}{5-j}\right\rfloor$    & 126   & 115         & 102 & 98   &  &  &  & &     \\ \cline{2-11} 
 & Interval   & (170,$\infty$) & (155,170] & (112,155]   & (79,112]  & &  & & &  \\ \cline{2-11} 
& $m$ in interval? & N  & N  & N  & Y  &  &     &     &     &     \\ 
\hline
\end{tabular}
 \caption{Steps in the algorithm to reduce the initial position.}
    \label{tab:Alg}
\end{table}
\end{example}

\subsection{Reduction after a Move} \label{subsec:afterMove}

In Example~\ref{ex:ghost}, the moves that required a reduction after the move were those in which a maximal stack was omitted. We will now show that this is true in general, but that this is not the only condition.

\begin{lemma} [General Reduction Lemma] \label{lem:redcondgenA} In \SN{n}{A},  a move on $\ell \in A$ stacks from a position \p~to a position $\p'$ only needs a reduction after the move if one of the following holds:
\begin{itemize}[leftmargin=4em]
\item[(R1)] play is on $\ell>a=\min(A)$ stacks, all maximal stacks are played on, and $\sp=a \cdot p_n +r$, where $0 \leq r < \ell-a$.
\item[(R2)] play is on $\ell$ stacks, at least one of the maximal stacks is omitted, and
 $\sp=a \cdot p_n +r$, where $0 \leq r < \ell$.
\end{itemize}
\end{lemma}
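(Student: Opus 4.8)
The statement only claims necessity (``is needed only if''), so it suffices to assume $\p$ is reduced — the standing assumption in this subsection, since we always work in the playable game graph — and show that if the option $\p'$ obtained by a move on $\ell\in A$ stacks fails to be reduced, then (R1) or (R2) holds. The plan is a direct computation of $\max(\p')$ combined with the NIRB characterization from \tref{thm:NIRB_A}. Note first that a move is possible, so $p_n\ge 1$, and that a move on $\ell$ stacks removes exactly $\ell$ tokens, so $\Sigma(\p')=\sp-\ell$. Since $\p$ is reduced, $a\cdot p_n\le\sp$; since $\p'$ is not reduced, $a\cdot\max(\p')>\Sigma(\p')=\sp-\ell$. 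Everything follows by splitting on whether every maximal stack of $\p$ is played on — two cases that are exhaustive and mutually exclusive (and note that the first case presupposes that the number of maximal stacks is at most $\ell$; otherwise one is automatically in the second case).

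\emph{Case 1: every maximal stack of $\p$ is played on.} Each stack of height $p_n$ is reduced to height $p_n-1$, while every non‑maximal stack has height $\le p_n-1$ before the move and hence $\le p_n-1$ after it, whether or not it was played on; therefore $\max(\p')=p_n-1$ exactly. Substituting into $a\cdot\max(\p')>\sp-\ell$ gives $a(p_n-1)>\sp-\ell$, that is, $\sp<a\cdot p_n+\ell-a$. Together with $a\cdot p_n\le\sp$ this yields $a\cdot p_n\le\sp<a\cdot p_n+(\ell-a)$, so $\sp=a\cdot p_n+r$ with $0\le r<\ell-a$; in particular $\ell-a\ge 1$, i.e.\ $\ell>a$. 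This is precisely condition (R1).

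\emph{Case 2: at least one maximal stack of $\p$ is omitted.} That stack retains height $p_n$, and since moves only remove tokens no stack height can increase, so $\max(\p')=p_n$. Then $a\cdot\max(\p')>\sp-\ell$ reads $a\cdot p_n>\sp-\ell$, i.e.\ $\sp<a\cdot p_n+\ell$, and combined with $a\cdot p_n\le\sp$ we get $\sp=a\cdot p_n+r$ with $0\le r<\ell$. This is condition (R2). Since the two cases cover all possibilities, a move from a reduced $\p$ to a non‑reduced $\p'$ always falls under (R1) or (R2), which completes the argument.

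I do not expect a genuine obstacle here; the routine points needing a little care are (i) verifying that $\max(\p')=p_n-1$ \emph{exactly} in Case~1 — one must check both that no surviving stack exceeds $p_n-1$ and that this value is attained (by the played‑on maximal stacks), using $p_n\ge 1$ so that $p_n-1\ge 0$ — and (ii) observing that Case~1 forces $\ell>a$, equivalently that when $\ell=a$ and all maximal stacks are played on the option is automatically reduced, so that the bound $0\le r<\ell-a$ appearing in (R1) is not vacuous.
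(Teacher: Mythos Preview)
Your proof is correct and follows essentially the same approach as the paper's: both split into the two cases according to whether all maximal stacks are played on, compute $\max(\p')$ in each case, and combine the NIRB inequality for $\p$ (giving $a\cdot p_n\le\sp$) with the failure of NIRB for $\p'$ (giving $\sp-\ell<a\cdot\max(\p')$) to obtain the stated bounds on $r$. Your version is slightly more explicit about verifying $\max(\p')=p_n-1$ in Case~1 and about why $\ell>a$ is forced there, but the argument is the same.
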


\begin{proof}
 We consider two cases, depending on whether all maximal stacks are played on or not. If play is on $\ell$ stacks and includes all maximal stacks, then $p_n'=p_n-1$ and reduction is needed only if $\sp-\ell=\spp<a\cdot(p_n-1)\le \sp-a$, which implies that $\sp= a \cdot p_n+r$, where $0 \leq r < \ell-a.$ Thus,  no reduction is needed when play is on the minimal number $a$ of stacks and includes all maximal stacks. 
In the second case, when at least one maximal stack is omitted, we have that $p_n'=p_n$ and reduction is needed only if $\sp-\ell<a\cdot p_n\le \sp$, which implies that $\sp= a \cdot p_n+r$, where $0 \leq r < \ell.$
  \end{proof}

We will apply Lemma~\ref{lem:redcondgenA} to the sets $A=\{n-1\}$ in Section~\ref{sec:P-pos} and $A=\{n-1,n\}$ in Section~\ref{sec:P-posMG}, respectively.

 \section{\P-Positions for \SN{n}{n-1}}\label{sec:P-pos}
 
The \P-positions of  \SN{n}{k} where $k=n-1$ are characterized by $o$, the number of stacks with odd height,  and by $s = \sp \bmod(2k)$.  For ease of readability, we  abuse notation and write $\p=(s,o)$ for a position $\p=(p_1,\ldots,p_n)$ of type $(s,o)$.

\begin{lemma} [Options when no reduction is needed after the move] \label{lem:noredopts} In the game \SN{n}{k} with $k=n-1$, when no reduction is needed after a move from a reduced position $\p=(s,o)$ to $\p'=(s',o')$, then  
\begin{equation*}\label{eq:noredopt_s}
s'= (s-k)\bmod (2k) =\left\{\begin{array}{cl}
s+k & \text{if  $s < k$} \\
s-k & \text{if  $s \ge k$} \\

  \end{array}\right.
  \end{equation*}
 and 
\begin{equation*}\label{eq:noredopt_o}
  o'=\left\{\begin{array}{cl}
      n-o+1 & \text{if the omitted stack is odd} \\
      n-o-1 & \text{if the omitted stack is even.}
  \end{array}\right.
  \end{equation*}
 For positions with $0<o<n$, both  options for $o'$ are legal, while positions with $o=n$ and $o=0$ have only the options  $o'=1$ and  $o'=n-1$, respectively. 
\end{lemma}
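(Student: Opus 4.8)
The plan is to track how the two statistics $s = \Sigma(\p) \bmod 2k$ and $o$ (the number of odd stacks) change under a single move on $k = n-1$ stacks from a reduced position $\p$, under the hypothesis that no reduction is needed afterwards. Since a move removes exactly one token from each of $k$ stacks, it decreases $\Sigma(\p)$ by exactly $k$, so $s' \equiv s - k \pmod{2k}$ unconditionally; the case split in \eqref{eq:noredopt_s} is then just the observation that among the residues $0, 1, \ldots, 2k-1$, subtracting $k$ modulo $2k$ is the same as adding $k$ when $s < k$ and subtracting $k$ when $s \ge k$. This part requires no use of the ``no reduction'' hypothesis and is essentially immediate.

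For \eqref{eq:noredopt_o}: a move on $k = n-1$ stacks omits exactly one stack, say the $j$-th. Every stack that is played on has its parity flipped, and the omitted stack keeps its parity. So I would count: if the omitted stack was odd, then among the other $n-1$ stacks, $o-1$ were odd and $n-o$ were even; after the move those $o-1$ become even and those $n-o$ become odd, so the new odd count is $(n-o) + 1 = n - o + 1$ (the $+1$ being the still-odd omitted stack). If the omitted stack was even, then $o$ of the other $n-1$ stacks were odd and $n-1-o$ were even; after flipping, the odd count among them is $n-1-o$, and the omitted stack contributes nothing, giving $o' = n-1-o = n-o-1$. This is a straightforward parity bookkeeping argument.

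The last sentence needs the most care, and this is where the reducedness hypothesis and Theorem~\ref{thm:NIRB} enter. I need to show: (i) when $0 < o < n$, a legal move exists that omits an odd stack, and another that omits an even stack — i.e. there is at least one odd stack and at least one even stack, which is exactly $0 < o < n$, but I also need that omitting that stack and playing on the remaining $n-1$ is actually a legal move (each of those $n-1$ stacks is nonzero). Here reducedness is essential: if $\p$ is reduced and nonzero, then by the NIRB condition $p_n \le \Sigma(\p)/k$, so at least $k+1 = n$ stacks... wait, more carefully, I'd argue that at most one stack can be zero, since if two stacks were zero we'd have at most $n-2 = k-1$ nonzero stacks and then $\Sigma(\p) \le (k-1)p_n < k\,p_n$, contradicting NIRB (this is exactly the argument used in the proof of Theorem~\ref{thm:NIRB}). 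So the only possibly-empty stack is $p_1$; a move that omits $p_1$ is always legal, and a move that omits any other stack is legal as well (all of $p_2, \ldots, p_n$ are nonzero). Then: if $0 < o < n$ there is an odd stack (possibly it is $p_1$, possibly not) and an even stack, and in each case we may omit it and play on the rest, so both values of $o'$ are realizable. If $o = n$ (all stacks odd, hence all nonzero, and none can be omitted-as-even), only $o' = n - o + 1 = 1$ occurs; if $o = 0$ (all even), only $o' = n - o - 1 = n-1$ occurs.

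The main obstacle is the legality check in the last sentence — making sure that when I say ``omit the odd stack'' I am describing an actual legal move, which forces me to rule out having two empty stacks, and that is precisely where I lean on Theorem~\ref{thm:NIRB}. One subtlety to handle cleanly: when $o = n$ the position cannot be all zeros (zero is even), so $\Sigma(\p) > 0$ and the move is genuinely available; similarly for $o = 0$ one should note the terminal position $\mathbf{0}$ is the degenerate case with no moves at all, which is consistent with the statement vacuously. I would state these edge cases explicitly to avoid an off-by-one gap.
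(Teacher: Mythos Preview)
Your derivation of the two formulas \eqref{eq:noredopt_s} and \eqref{eq:noredopt_o} is correct and is essentially the same parity bookkeeping the paper does (the paper's proof is two sentences long and covers only these formulas).

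Where you go beyond the paper is in trying to justify the last sentence about availability, and here there is a genuine slip. You write ``a move that omits any other stack is legal as well (all of $p_2,\ldots,p_n$ are nonzero)'', but that is the wrong set of stacks to check: if you omit $p_j$ with $j\ge 2$, you must play on $p_1$, and you have only shown $p_1\ge 0$, not $p_1>0$. Concretely, take $\p=(0,1,1,\ldots,1)$ in $\SN{n}{n-1}$: this is reduced (NIRB reads $1\le (n-1)/(n-1)$), has $o=n-1$ with $0<o<n$, yet the \emph{only} legal move omits the even stack $p_1$, so the option $o'=n-o+1$ is in fact not available. So the gap in your argument is real, and it reflects a real looseness in the lemma's last sentence rather than something you can patch with NIRB alone.

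The paper's own proof simply does not address this sentence; in the later proof of \tref{thm:P-pos_n_odd} the authors handle availability on a case-by-case basis (and the moves they actually need always omit an even stack, for which $p_1$ suffices). So your instinct to scrutinize legality is good, but the clean universal claim you are aiming for is slightly stronger than what is true; the right fix is either to read ``available'' as ``this is the value \emph{if} such a stack is omitted'' (as the paper implicitly does), or to add the hypothesis $p_1>0$ when asserting that both options are realizable.
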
  

\begin{proof}
When no reduction is needed, then the sum of stack heights is reduced by  $k$ tokens, so $s' = (s-k)\bmod(2k)$. For the values of $o'$, when the omitted stack is odd, then all the even stacks change parity (a total of $n-o$ stacks) and the omitted stack contributes another odd stack for $\p'$. When the omitted stack is even, then $n-o-1$ even stacks change parity. 
 \end{proof}

On the other hand, when reduction is needed, then the reduced option and its characterization are given by Lemma~\ref{lem:redposodd}. 

\begin{lemma} [Options when reduction is needed after amove] \label{lem:redposodd} 
 Let $ \alpha $ be the number of  maximal stacks of $\p$. In the game \SN{n}{n-1}, when reduction is needed after a move from a reduced position $\p=(s,o)$, then the reduction of the option $\p'$ is given by 
  $$ r(\p')= \left\{ \begin{array}{ll}
      (p_1-1,\ldots,p_n-1) & \text{ if } s\not \equiv 0\bmod k \\
      (p_1-1,\ldots,p_{n-\alpha}-1,p_n-2,\ldots,p_n-2) & \text{ if } s\equiv 0\bmod k. \\
  \end{array}\right.
  $$
  Furthermore, if $0\leq s<k$, then $\max(\p)$ is even, otherwise $\max(\p)$ is odd. The  move is from $\p=(s,o)$ to $r(\p')=(s',o')$, which is characterized  as follows:
  
\begin{itemize}
    \item  If $s\not \equiv 0\bmod k$, then $o'=n-o$ and $s'=\left\{\begin{array}{ll}
          s+k-1 & \text{ if }0< s < k \\
          s-(k+1) & \text{ if }k < s <2k. 
      \end{array} \right.$
  \item If $s\equiv 0\bmod k$, then $(s',o')=\left\{
       \begin{array}{ll}
          (k-1-\alpha, \, n-o-\alpha) & \text{ for $s=0$ and $\alpha \le \min\{n-o,n-2\}$} \\
          (2k-1-\alpha, \, n-o+\alpha) & \text{ for $s=k$ and $\alpha \le \min\{o,n-2\}$.} 
           \end{array} \right.$
  \end{itemize}
\end{lemma}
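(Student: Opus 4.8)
The plan is to combine the Generalized Reduction Lemma (\lref{lem:redcondgenA}) with the NIRB criterion (\tref{thm:NIRB}), applied with $A=\{k\}$, $k=n-1$, so $a=k$. First I would set up notation: since $\p$ is reduced and a move to $\p'$ requires reduction, \lref{lem:redcondgenA} tells us exactly which case we are in. With $a=k$, case (R1) (all maximal stacks played on) requires $r<\ell-a=0$, which is impossible; hence we must be in case (R2), so the move omits a maximal stack and $\sp=k\cdot p_n+r$ with $0\le r<\ell$. But here $\ell=k$ always (we play on exactly $k=n-1$ stacks), so $0\le r<k$, i.e. $\sp=k\cdot p_n+r$ for some $0\le r<k$. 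Translating into the type notation: $\sp\equiv r\pmod{k}$, and I would next argue that $r=s\bmod k$ — this uses that $s=\sp\bmod(2k)$, so $s\in\{r,\,r+k\}$, and the two sub-cases $s<k$ versus $s\ge k$ correspond to $s=r$ (maximum even) and $s=r+k$ (maximum odd); the parity claim about $\max(\p)$ follows because $p_n=(\sp-r)/k=(\sp-s)/k$ when $s<k$ is an even multiple-structure... more cleanly: $p_n=(\sp-(s\bmod k))/k$ and substituting $\sp=s+2k\cdot(\text{integer})$ gives $p_n\equiv (s-(s\bmod k))/k\pmod 2$, which is $0$ when $s<k$ and $1$ when $k\le s<2k$. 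I would record this parity fact early since it drives everything else.

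Next I would determine $r(\p')$. Since a maximal stack is omitted, $p_n'=p_n$ and $\Sigma(\p')=\sp-k=k\cdot p_n+r-k=k(p_n-1)+r$, so $\p'$ violates NIRB: $k\cdot p_n'>\Sigma(\p')$. By \lref{lem:redx=cttxm}, $r(\p')=ctt(\p',m)$ where $m$ is the largest integer with $ctt(\p',m)$ reduced. The two cases split on whether, after chopping the $\alpha$ maximal stacks down by one (to $p_n-1$), the position is reduced or must be chopped further. Chopping all maximal stacks of $\p'$ (there are $\alpha$ of them, all equal to $p_n$, none played on since only one maximal stack was omitted — wait, I need to be careful: the move omits \emph{one} maximal stack and plays on the other $\alpha-1$ maximal stacks plus $k-\alpha+1$ others, so in $\p'$ exactly one stack has height $p_n$ and $\alpha-1$ stacks have height $p_n-1$; actually re-examining, when reduction is needed we should check which maximal stacks survive). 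Let me instead compute directly: with $m=p_n-1$, $\Sigma(ctt(\p',p_n-1))=\Sigma(\p')-(\#\text{stacks of height }p_n\text{ in }\p')$. The key computation is that $k(p_n-1)\le \Sigma(ctt(\p',p_n-1))$ iff $r\ge(\text{that count})$, and one shows the count equals $1$ unless $r\equiv 0\pmod k$ forces... this is where the $s\equiv 0\bmod k$ versus $s\not\equiv 0\bmod k$ dichotomy enters: $r=0$ exactly when $s\equiv 0\pmod k$ (i.e. $s=0$ or $s=k$), and then $ctt(\p',p_n-1)$ has $\Sigma=k(p_n-1)+0-(\text{extra})<k(p_n-1)$, forcing a second chop down to $p_n-2$ on the $\alpha$ formerly-maximal stacks. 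When $r\not\equiv 0$, i.e. $1\le r<k$, chopping once suffices and $r(\p')=(p_1-1,\dots,p_n-1)$ — note every stack of $\p'$ that was played on is $p_i-1$, and since the omitted stack was maximal and now we chop it to $p_n-1$ as well, all coordinates drop by exactly $1$.

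Finally I would compute $(s',o')$ in each regime by bookkeeping. For $o'$: reducing every stack by $1$ flips the parity of all $n$ stacks, so $o'=n-o$ — this handles the $s\not\equiv 0\bmod k$ case. For the $s\equiv 0\bmod k$ case, $n-\alpha$ stacks drop by $1$ (parity flips) and $\alpha$ stacks drop by $2$ (parity unchanged); the $\alpha$ stacks that drop by $2$ are the maximal stacks, whose parity is determined by the parity of $p_n$ computed above (even when $s=0$, odd when $s=k$), so when $s=0$ those $\alpha$ stacks were even and stay even, giving $o'=(n-o)-0$... rather $o'=n-o-\alpha$ after also accounting that the flipped stacks contribute $n-\alpha-(o-\alpha)=n-o$ odds minus... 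I'd carefully count: original odds among the non-maximal stacks is $o-0$ or $o-\alpha$ depending on parity of max; the stated formulas $o'=n-o-\alpha$ for $s=0$ and $o'=n-o+\alpha$ for $s=k$ come out of this. For $s'$: $\Sigma(r(\p'))=\Sigma(\p')-(\text{tokens chopped})$; when $s\not\equiv0$, $\Sigma(r(\p'))=\sp-k-(\text{one chop of }1\text{ per maximal stack beyond the played ones})$... I would just compute $\Sigma(r(\p'))=\Sigma(\p')-\#\{i:p_i'=p_n\}$ in the first case and $=\Sigma(\p')-2\alpha-\dots$ hmm — cleaner: $r(\p')$ has max $p_n-1$ (resp. $p_n-2$) and is reduced, so $\Sigma(r(\p'))=\sum(p_i-1)=\sp-n$ (resp. $\sp-n-\alpha$), then reduce mod $2k=2(n-1)$ using $\sp\equiv s$ to get the stated $s'$, and check the parity-of-maximum claim matches $s'<k$ vs $s'\ge k$ as a consistency test. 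The side conditions $\alpha\le\min\{n-o,n-2\}$ (resp. $\alpha\le\min\{o,n-2\}$) record that there must be enough non-maximal stacks to play the $k-\alpha+1$ forced plays and that not all stacks are maximal; I would verify these arise precisely from requiring the move to be legal and reduction to actually be triggered.

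\textbf{Main obstacle.} The delicate point is the bookkeeping in the $s\equiv 0\bmod k$ case: correctly counting how many stacks equal $p_n$ in $\p'$ (which depends on how many maximal stacks of $\p$ were played on versus omitted), verifying that exactly a second chop to $p_n-2$ is forced and not a third, and then tracking parities through the mixed "drop by $1$" / "drop by $2$" reduction to land on the exact formulas for $o'$ and $s'$ together with their stated ranges for $\alpha$. The $s\not\equiv 0\bmod k$ case is comparatively routine once one sees every coordinate drops by exactly $1$.
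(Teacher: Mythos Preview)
Your plan is essentially the paper's own argument: invoke \lref{lem:redcondgenA} with $A=\{k\}$ to force case (R2) and obtain $\sp=k\,p_n+r$ with $0\le r<k$; deduce the parity of $p_n$ from $\sp\equiv s\pmod{2k}$; then split on $r>0$ versus $r=0$ to get the two forms of $r(\p')$, followed by straightforward bookkeeping for $(s',o')$. The only cosmetic difference is that you frame the computation of $r(\p')$ via \lref{lem:redx=cttxm} and the $ctt$ operator, whereas the paper simply writes down the candidate $r(\p')$ and checks NIRB directly; both amount to the same inequality $k(p_n-1)\le \sp-n$ being equivalent to $r\ge 1$.

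Two small points to tighten. First, your momentary confusion about ``how many stacks equal $p_n$ in $\p'$'' resolves cleanly once you note that exactly one maximal stack is omitted (the other $\alpha-1$ are played on), so $\p'$ has a \emph{single} stack at height $p_n$; the ``count'' you reference is always $1$, not something that varies with $r$. Second, the constraints $\alpha\le n-2$ and $\alpha\le n-o$ (resp.\ $\alpha\le o$) do not come from legality of the move as you suggest, but rather: $\alpha\le n-2$ is forced by $\sp=k\,p_n$ together with the requirement that a maximal stack be omittable (the paper rules out $\alpha\in\{n-1,n\}$ by short contradictions), while $\alpha\le n-o$ (resp.\ $\alpha\le o$) follows directly from the parity of $p_n$ that you already established. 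With those clarifications your sketch goes through without further obstacles.
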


\begin{proof}
Applying Lemma~\ref{lem:redcondgenA} for $A=\{k\}$ with $k=n-1$, reduction after a move from  position $\p= (s,o)$ is needed only when a maximal stack is omitted and
\begin{equation}\label{eq:redcond}
\sp=p_n\cdot k + r, \text{ with } 0 \leq r < k.
\end{equation}

The adjustment to make the option reduced will require to lower the omitted maximal stack, and potentially the resulting maximal stack(s). 

First, we consider the case $s\not\equiv 0 \bmod k$, which implies that $r>0$.  Using NIRB and Equation~\eqref{eq:redcond}, we have that  $p_n \le \sp/k=p_n +r/k$. Since $p_n$ is an integer and $r \ge 1$, we also have that $p_n \le p_n + (r-1)/k$.  Thus, $\tilde{\p}=(p_1-1,\ldots,p_n-1)$, where the omitted maximal stack is reduced by one token, satisfies NIRB since
$$\tilde{p}_n=p_n-1\le p_n+\frac{r-1}{k}-1=\frac{p_n \cdot k +r-(k+1)}{k} = \frac{\Sigma(\tilde{\p})}{k},$$
so $\tilde{\p}=r(\p')$. Having removed $n=k+1$ tokens,  $s'=(\sp-(k+1) )\bmod(2k)$, thus $ s'=s+k-1$ for $0<s<k$ and $s'=s-(k+1)$ for $k<s<2k$. Since all stacks have changed parity, $o'=o(r(\p'))=n-o$.

 Now let $s \equiv 0 \bmod k$, that is, $\sp = p_n \cdot k$. First we prove that $\alpha \leq n-2$.  If $\alpha = n$ then $\sp = p_n  (k+1) \ne p_n \cdot k$, a contradiction.   If $\alpha = n-1$, then  $p_1=0$. Thus, all maximal stacks are being played on, a contradiction to the fact that reduction is needed, so $\alpha \le n-2$. Now assume that we omit one of the maximal stacks. Reducing just the omitted maximal stack by one (as in the case of $s \not \equiv 0 \bmod k$) is not enough, because $\tilde{\p}=(p_1-1,\ldots,p_n-1)$ does not satisfy NIRB:
$$\frac{\Sigma(\tilde{\p})}{k}=\frac{\sp-n}{k}=p_n-\frac{n}{k}<p_n-1=\tilde{p}_n.$$
Therefore, we adjust $\tilde{\p}$ by decreasing all of its $\alpha$ maxima  of value  $p_n-1$ by one and show that the resulting position $r(\p')=(p_1-1,\dots,p_{n-\alpha}-1,p_n-2,\dots, p_n-2)$
is reduced: 
$$ p_n-2 \leq \frac{\sp}{k}-2=\frac{\sp-2(n-1)}{k}=\frac{\sp-n-(n-2)}{k}
    \leq\frac{\sp-n-\alpha}{k}=\frac{\Sigma(r(\p'))}{k}.$$
The total number of tokens has been decreased by $n+\alpha$, so $\Sigma(r(\p'))=\sp-k-(\alpha+1)$. If $s=0$, then $s'=\Sigma(r(\p'))\bmod (2k)=k-(\alpha+1)$, while for $s=k$, we have $s'=2k-(\alpha+1)$.  

To determine the parity of the maximum, we use the definition of $s$ and the reduction criterion.  By definition of $s$, we have  $\sp=c\cdot (2k)+s=2c\cdot k+s$, while the reduction criterion states that $\sp=p_n\cdot k + r$, with $0 \leq r \le k-1$.  When $0\le s <k$, we have that $2c=p_n$, so the maximum is even, and  $r=0$ corresponds to $s=0$.  If $k\le s <2k$, then $\sp=c \cdot 2k+k+r=(2c+1)k+r$ with $0\le r <k$, so the maximum is odd and $r=0$ corresponds to $s=k$.

The values for $o'$ are determined as follows. When $s=0$, the maximum is even, thus $\alpha \leq n-o$. Decreasing each stack by 1 produces $n-o$ odd stacks, then decreasing the $\alpha$ (now odd) maxima by an additional token yields $o'=n-o-\alpha$. When $s=k$, the maximal stacks are odd, so $\alpha\leq o$, and $o'=n-o+\alpha$. 
 \end{proof}

Note that the assertion that the maximum is even or odd results from the reduction condition. There obviously can be positions with $s\ge k$ that have an even maximum. It just means that for such a position, reduction will not be needed. For example, the position $\p=(6,7,8,8,8)$ is reduced and has $s=37 \bmod(8)=5>4=k$ and an even maximum.  Even if we do not play on all maximal stacks, that is, make the move to $\p'=(5,6,7,7,8)$, we still have that $\p'$ satisfies NIRB and does not need reduction.

Lemmas~\ref{lem:noredopts} and~\ref{lem:redposodd} give the form of all possible options of a position $\p$ in the playable game graph $\tilde{\mathcal{G}}$ of \SN{n}{n-1}, incorporating the adjustment needed for (some of) the moves in $\tilde{\mathcal{G}}$.  We are now ready to state the result on the \P-positions of \SNr{n}{n-1}, \ruleset{Slow$A$-Nim} played on all but one stack in the playable game graph.

\begin{theorem} \label{thm:P-pos_ex_k-1} For the game \SNr{n}{k} with  $k=n-1$, let $s = \sp \bmod (2k)$ and  $o$ be the number of stack heights that are odd. Then for all positions of the game, $s$ and $o$  have the same parity, and the set of \P-positions is given by 

$$\begin{aligned}
  \P_{n,n-1} = & \{(s,o)\mid s,o \text{ have same parity},\, 0 \leq s < k-1, \, o\leq s\}\cup \{(n-2,o)\mid o \text{ has same parity as }n\} \\
  & \cup\{(s,o)\mid s,o \text{ have same parity}, \,k-1 < s < 2k-1, \, o \le 2(k-1)-s\}=:S_1\cup S_2 \cup S_3.
\end{aligned}$$
\end{theorem}

We can visualize the \P-positions via a {\it position grid} that shows which combinations of values of $s$ and $o$ are \P- or \N-positions. Figure~\ref{fig:SN(9,8)} depicts the \P-positions of \SN{9}{8}  with colored cells. Cells for which $s$ and $o$ have different parity are indicated with a dot. The white cells represent \N-positions.

\begin{figure}[!htb]
     \centering   
         \includegraphics[width=.3\textwidth]{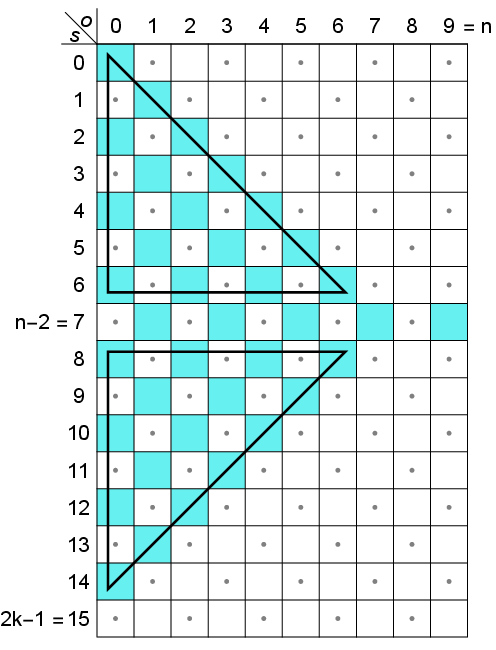}
        \caption{Position grid of the games \SNr{n}{k} for $k=n-1$, illustrated for $n=9$ and $k=8$. Cells representing \P-positions are colored, and combinations of values of $s$ and $o$ that cannot occur are shown with a dot. White cells represent \N-positions.}
        \label{fig:SN(9,8)}
\end{figure}

\begin{proof}
First we show that all positions $(s,o)$ have the same parity. If $o$ is odd, then $\sp$ is odd. The remainder of an odd value with regard to an even divisor is odd, so $s$ is odd. If $o$ is even, then $\sp$ is even, and consequently, the remainder is also even. Thus, $s$ and $o$ must have the same parity. 

Now we prove that $\P_{n,k}$ as given above is the set of \P-positions of this game. We first show that every move from a position in $\P_{n,k}$ without reduction is to a position in $\P_{n,k}^c$.  By Lemma~\ref{lem:noredopts}, $s'= (s-k)\bmod{(2k)}$ and $o'\in\{n-o-1,n-o+1\}$. For $\p \in S_1$,  we have $s'=s+k \geq k$ and $o'\ge n-o-1
\ge n-s-1=(k+1)-s-1 > 2(k-1)-(s+k)=2(k-1)-s'$, so $\p' \in P_{n,k}^c.$ Similarly, for $\p \in S_3$, we have $s' = s-k \leq k-2$ and $o' \ge  n-o-1 \ge n-(2(k-1)-s)-1=2+(s-k)>s'$, so $\p' \in \P_{n,k}^c$. Finally, $\p \in S_2$ with $(k-1,o)$ has option $\p'=( 2k-1, o')\in \P_{n,k}^c$.

We next turn to the moves that require a reduction. We use a graphical argument based on Lemma~\ref{lem:redposodd}. We color the positions in $\P_{n,k}$ under consideration in a darker cyan and depict their options as vertically striped cells. If $k < s \le 2k-2$, then the option of $\p=(s,o)$ is given by $(s',o')=(s-(k+1),n-o)$. Geometrically, the positions are moved ``up" in the position grid by $k+1$ rows, and then vertically reflected across the mid line as shown in Figure~\ref{fig:evenRedBZ}. (If $n$ is odd, then the mid line is the vertical line separating columns $(n-1)/2$ and $(n+1)/2$, while for even $n$, it is the vertical line through the center of the squares with $o=n/2$.) Similarly, if $0 < s < k$, then the option is $(s',o')=(s+k-1,n-o)$, that is, a shift ``downwards" by $k-1$ rows, and then the same type of reflection, shown in Figure~\ref{fig:evenRedLZ}. All the options are in $P_{n,k}^c$, except the option of $\p=(k-1,n)$, which maps to $\p'=(2k-2, 0) \in \P_{n,k}$. However, $\p$ has only odd stacks, and by Lemma~\ref{lem:redposodd}, the maximum is even when reduction is needed, so this move does not occur. 

\begin{figure}[!htb]
     \centering
     \begin{subfigure}[b]{0.31\textwidth}
         \centering
         \includegraphics[width=0.9\textwidth]{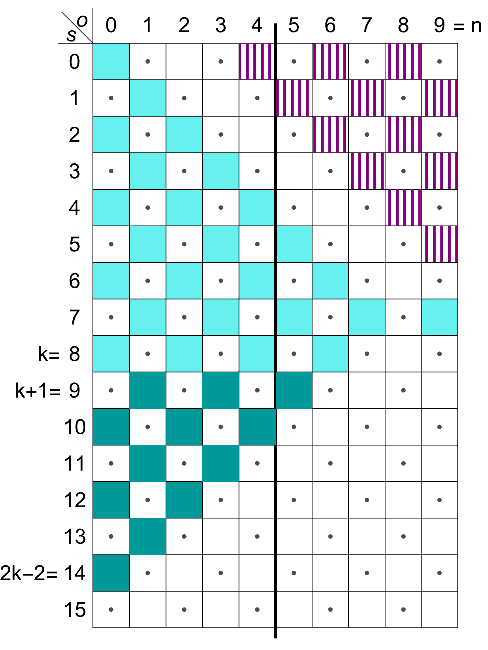}
         \caption{Case $s>k$.}
         \label{fig:evenRedBZ}
     \end{subfigure}
     \begin{subfigure}[b]{0.31\textwidth}
         \centering
         \includegraphics[width=0.9\textwidth]{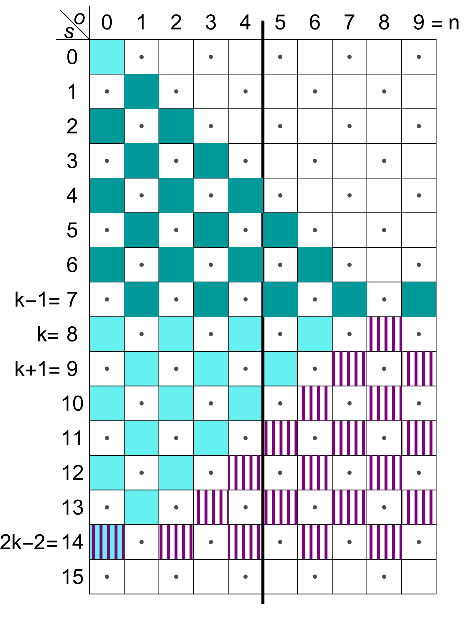}
         \caption{Case $0<s<k$.}
         \label{fig:evenRedLZ}
     \end{subfigure}
     \begin{subfigure}[b]{0.295
     \textwidth}
         \centering
         \includegraphics[width=1\textwidth]{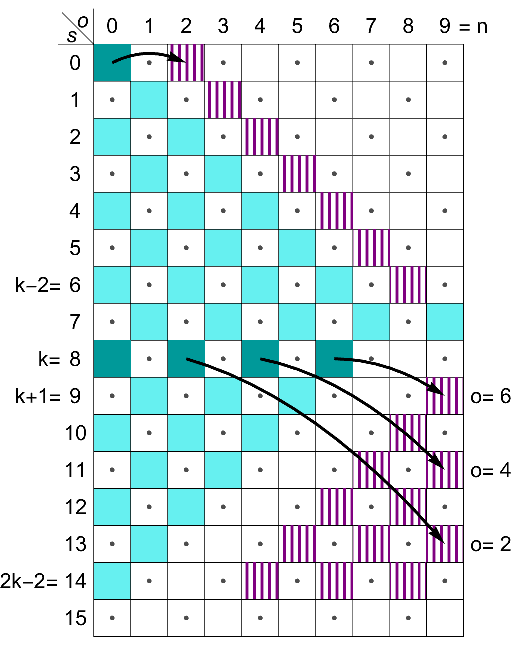}
         \caption{Cases $s=0$ and $s=k$.}
         \label{fig:evenRedZ}
     \end{subfigure}
        \caption{Visualization of the moves for $\p \in \P_{n,k}$ when reduction is needed, illustrated using the position grid for $n=9$ and $k=8$. Cells under consideration are colored in darker cyan, and their options are depicted as vertically striped cells. For (c), the arrow points to the top of a diagonal of options associated with the respective positions. }
        \label{fig:OPtsEvenRed}
        \hfill
\end{figure} 
 Left to consider are the positions with $s=0$ and $s=k$,  where the options depend on the number $\alpha$ of maxima, so we need to consider all possible values of $\alpha$. For $s=0$, there is just  $\p=(0,0)$ to be considered.  Since $o=0$,  we have $1 \leq \alpha \leq n-2=k-1$. The options are of the form $(k-1-\alpha,n-\alpha)$, or equivalently, $(i,i+2)$ for $i=0,\ldots,k-2$, illustrated with an arrow from $\p$ to the top of the descending diagonal in the upper part of the grid of Figure~\ref{fig:evenRedZ}.  Next we consider $s=k$, where the options are given by $(s',o')=(2k-1-\alpha, k+1-o+\alpha) $ with $\alpha \le \min\{o,n-2\}$. When $o=0$, no reduction is required. This is because by Lemma~\ref{lem:redposodd}, the maximum would have to be odd, but $\p$ has no odd stacks.   For positions $(k,o)$ with $o \ge 2$ and $\alpha \le o$, there are $o$ possible  options, and they form the diagonals increasing from position $(2k-2,n+1-o)$ (for $\alpha=1$) to position $(2k-1-o,n)$ (for $\alpha = o$) in the lower part of the grid of Figure~\ref{fig:OPtsEvenRed}(c). 
In all cases, the options are in $\P_{n,k}^c$, so there is no move from $\P_{n,k}$ to $\P_{n,k}$.

We next show that for any $\p \in \P_{n,k}^c$ we have a move to $\p' \in \P_{n,k}$.  We first consider whether a non-reduction move exists, and then deal with any cases in which such a move does not exist. By Lemma~\ref{lem:noredopts}, positions in $\P_{n,k}^c$ with $s <  k-1$ get mapped to $(s+k,o')$ and those with $s \geq k$ get mapped to $(s-k,o')$,  where $o' \in\{n-o-1,n-0+1\}$ in both cases.  In each case there is a shift ``up" or ``down" by $k$ rows, followed by a vertical reflection across the mid line. The options of $\p$ are located on the left and right of the reflected cell. Figure~\ref{fig:OPtsEvenNRW} shows the positions of  $\P_{n,k}^c$ that are under consideration in purple,  together with examples of positions in the interior and on the edge of $\P_{n,k}^c$ and their respective options (striped cells). For positions in the interior, both options are in $\P_{n,k}$, so the non-reduction move is guaranteed as we can leave out either an odd or an even non-maximal stack. Likewise,  $\p=(2k-1,1)$, while on the edge of $\P_{n,k}^c$, does have both non-reduction options available. Positions on the edge of $\P_{n,k}^c$ without guaranteed non-reduction move to $\P_{n,k}$ are marked with a diagonal in Figure~\ref{fig:OPtsEvenNRW}.

\begin{figure}[htb]
     \centering
     \begin{subfigure}[b]{0.35\textwidth}
         \centering
         \includegraphics[width=.9\textwidth]{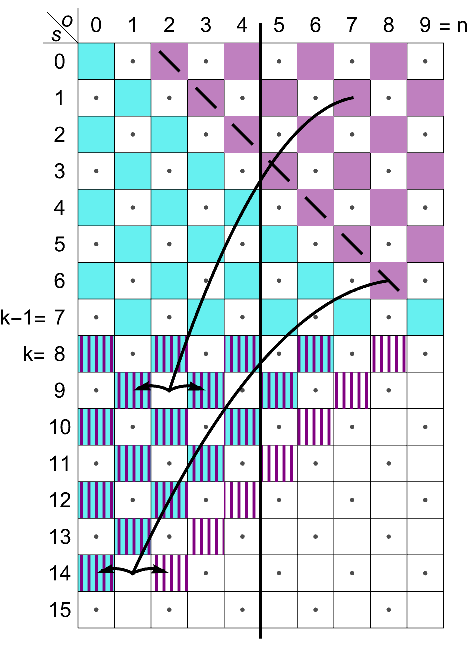}
         \caption{Options for $\p \in \P_{n,k}^c$ with $s<k$.}
         \label{fig:WUHNR}
     \end{subfigure}
     \hspace{0.1\textwidth}
     \begin{subfigure}[b]{0.35\textwidth}
         \centering
         \includegraphics[width=.9
\textwidth]{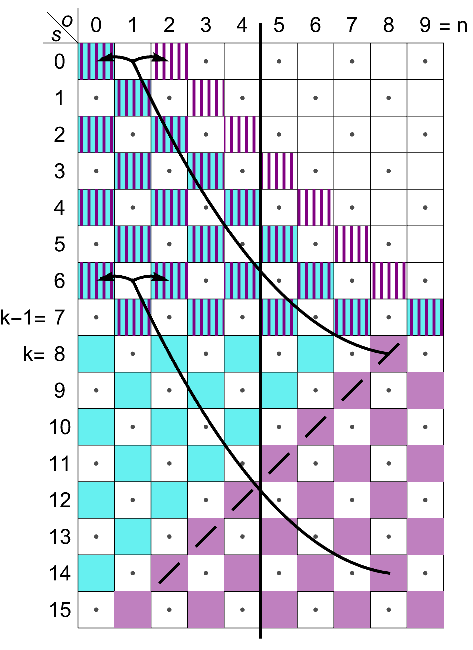}
         \caption{Options for $\p \in \P_{n,k}^c$ with $s\ge k$.}
         \label{fig:edge1}
     \end{subfigure}
        \caption{Visualization of the non-reduction moves of $\p \in \P_{n,k}^c$, illustrated for $n=9$ and $k=8$. Arrows indicate the two non-reduction options. Positions without a guaranteed non-reduction move are marked with a diagonal.}
        \label{fig:OPtsEvenNRW}
\end{figure} 

These positions are given by $\{(s,s+2)\mid0 \le s<k-1\}\cup\{(s,2k-s)\mid k \le s \le 2k-2\}$ and they have only the left option to $(s',n-o-1)\in \P_{n,k}$ available. This non-reduction move is possible when either the omitted stack is an even non-maximum or when omitting an even maximum does not require reduction. Thus,  reduction is needed if the position is of the form 
\begin{equation*}\label{eq:problemcases}
    \p=(\underbrace{p_1,p_2,\ldots, p_o}_{o \text{ odd stacks}},\underbrace{p_n,\ldots,p_n}_{\text{even stacks}})
\end{equation*}
where all even stacks are maximal. By Lemma~\ref{lem:redposodd}, $p_n$ is odd for $k \le s <2k$,  so  the non-reduction move to $(s',n-o-1)\in \P_{n,k}$ is available for $\p \in \{(s,2k-s)\mid k-1<s \le 2k-2\}$. 

Left to consider are the positions of the form $\{(s,s+2)\mid0 \le s<k-1\}$ which have an even maximum value. For $s \ge 1$, the reduction move is to $\p'=(s+k-1,n-o)=(s+k-1,n-(s+2))=(k-1+s,k-1-s)$. Geometrically, this is the diagonal of the form  $\{(2(k-1)-i,i)\mid i=1,\ldots,k-2\}\in S_3$. For $s=0$,   $\p=(0,2)$, that is, $\p$ has two odd stacks and $\alpha=n-2$ maximal stacks, so $\p'=(0,0)$ by Lemma~\ref{lem:redposodd}. This shows that for every $\p \in \P_{n,k}^c$ there is a move to $\p' \in \P_{n,k}$. Therefore, $\P_{n,k}$ is the set of $\P$-positions of \SNr{n}{n-1}.
 \end{proof}

\section{\P-Positions for \SN{n}{\{n-1,n\}}}\label{sec:P-posMG}

We now consider the game where we allow play on {\bf at least} $n-1$ stacks,  that is, the game \SN{n}{\{n-1,n\}}. This is the game \SN{n}{n-1} with one additional move, the move on all stacks. By Proposition~\ref{prop:Game_n_A_Strong}, the column in the position grid corresponding to $o=n$ now consists of \N-positions. In fact, this the only difference in the position grids of these two closely related games. To prove this hypothesis, we will use Proposition~\ref{prop:GameExtended}. It provides a general tool for proving the pattern of the \P-positions of a new game that differs from a solved game by only a few moves. 

\begin{proposition} \label{prop:GameExtended}
Let $\N$ and $\P$ be the set of \N- and \P-positions, respectively,  of a game $G$ with move set $M.$ Assume that we extend $G$ to $G'$ by allowing additional moves given by the move set $E$, that is, $G'$ has move set $M'=M \cup E$ with $M \cap E =\emptyset$. 
Suppose there exists a set $Q\subset \P$ such that:
\begin{enumerate}
    \item \label{it:GameExtI1} No move in $E$ links two elements of $\P \setminus Q$.
    \item \label{it:GameExtI2} For any $\p\in \N$ with a move in $M$ to a position in $Q$, there exists a move in $M'$ to a position in $\P\setminus Q$.
    \item \label{it:GameExtI3} From every position $\p\in Q$, there is a move in $E$ to a position in $\P \setminus Q$.
\end{enumerate}
Then, $\N'=\N \cup Q$ and $\P'=\P \setminus Q$ are the sets of \N- and \P-positions of the game $G'$.
\end{proposition}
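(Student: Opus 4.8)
The plan is to verify directly that the partition of the positions of $G'$ into $\P' = \P\setminus Q$ and $\N' = \N\cup Q$ satisfies the two hypotheses of \tref{thm:lose}: every move in $G'$ from a position in $\P'$ lands in $\N'$, and every position in $\N'$ has a move in $G'$ to a position in $\P'$. Since $M' = M\cup E$, each of these verifications splits into a case for the old moves $M$ and a case for the new moves $E$.

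First I would handle closure of $\P'$ under $G'$-moves. Take $\p\in\P' = \P\setminus Q$. For a move in $M$: since $\p\in\P$ in the original game $G$, every $M$-move from $\p$ goes to some $\q\in\N$; I must check $\q\in\N' = \N\cup Q$, which is immediate since $\q\in\N$. For a move in $E$: by hypothesis~(\ref{it:GameExtI1}), no $E$-move links two elements of $\P\setminus Q$, so an $E$-move from $\p\in\P\setminus Q$ cannot land in $\P\setminus Q = \P'$; hence it lands in the complement, which is exactly $\N'$. That settles the first bullet of \tref{thm:lose}.

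Next I would show every position in $\N' = \N\cup Q$ has a $G'$-move to $\P' = \P\setminus Q$, splitting on whether the position lies in $Q$ or in $\N$. If $\p\in Q$: hypothesis~(\ref{it:GameExtI3}) gives directly a move in $E\subset M'$ to a position in $\P\setminus Q = \P'$. If $\p\in\N$: because $\p$ is an $\N$-position of the original game $G$, there is an $M$-move from $\p$ to some $\q\in\P$. If $\q\in\P\setminus Q$, that same move works in $G'$ and we are done. The remaining subcase is $\q\in Q$ — here the $M$-move alone is not good enough, and this is precisely where hypothesis~(\ref{it:GameExtI2}) is invoked: it guarantees that any such $\p\in\N$ (one with an $M$-move into $Q$) has a move in $M'$ to a position in $\P\setminus Q = \P'$. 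This covers all of $\N'$, establishing the second bullet, and \tref{thm:lose} then yields $\P' = \P\setminus Q$ and $\N' = \N\cup Q$ as claimed.

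The argument is essentially a bookkeeping exercise once the three hypotheses are in place, so there is no serious obstacle; the only subtlety to get right is the case analysis in the second half — recognizing that the troublesome $\N$-positions are exactly those whose only $M$-witness to $\P$ lands in $Q$, and that hypothesis~(\ref{it:GameExtI2}) is stated precisely to rescue them. One should also note at the outset that $\P'$ and $\N'$ do partition the position set of $G'$ (same underlying positions as $G$), and that finiteness of the game is inherited, so that \tref{thm:lose} applies.
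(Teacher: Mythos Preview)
Your proof is correct and follows essentially the same approach as the paper: apply \tref{thm:lose} with $A=\P\setminus Q$ and $B=\N\cup Q$, using condition~(\ref{it:GameExtI1}) together with the definition of $\P$ for the first bullet, and splitting $B$ into $\N$ and $Q$ with conditions~(\ref{it:GameExtI2}) and~(\ref{it:GameExtI3}) for the second. Your version is slightly more explicit in spelling out the subcases, but the structure and logic are identical.
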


\begin{proof}
We show that the restrictions on the set $Q$ ensure that the properties that determine the set of \P-positions of a game  hold for $A=\P\setminus Q$ and $B=\N \cup Q$.  Condition \ref{it:GameExtI1}
 ensures that there is no move from $A$ to $A$ using moves in $E$, and by the definition of $\P$, no move in $M$ links any positions in $A \subset \P$. Now suppose $\p \in B=\N \cup Q$. We need to show that there is a move from $\p$ to $A$.
If $\p\in \N$, then by definition of $\N$ there is move from $\p$ to $\p' \in \P$. If this move is to a position in $Q \subset \P$, then condition \ref{it:GameExtI2} ensures there is an alternate move in $M'$ to a position in $A$.  If $\p\in Q$, then there is a move from $\p$ to $A$ by condition \ref{it:GameExtI3}. Therefore, $\P'=A$, $\N'=B$ and the proof is complete.   
\end{proof}

We now state the result for the game \SNr{n}{\{n-1,n\}}. Figure~\ref{fig:SN(9,8)ext} compares the position grids of \SNr{n}{n-1} (on the left) and \SNr{n}{\{n-1,n\}} (on the right), illustrated for $n=9$. They indeed only differ  in position $\p=(n-2,n)$, which changed from being a \P-position to being an \N-position, as required by Proposition~\ref{prop:Game_n_A_Strong}.

\begin{theorem} \label{thm:P-pos_ex_AL}
For the game $\SNr{n}{A}$ with  $A=\{n-1,n\}$, let $s = \sp \bmod (2n-2)$ and  $o$ be the number of stack heights that are odd. Then for all positions in the game, $s$ and $o$  have the same parity, and the set of \P-positions is given by 
$$\begin{aligned}
  \P_{n,A} = & \{(s,o)\mid s,o \text{ have same parity}, \,  s < n-2, \, o\leq s\}\cup \{(n-2,o)\mid o,n \text{ have same parity}, \, o \neq n\},    \\
  & \cup\{(s,o)\mid s,o \text{ have same parity}, \, n-2 < s < 2n-3,  \,o \le 2(n-2)-s\}=:S_1\cup S_2 \cup S_3.
\end{aligned}$$
\end{theorem}

\begin{figure}[!htb]
     \centering
     \begin{subfigure}[b]{0.40\textwidth}
         \centering
  \includegraphics[width=.79\textwidth]{P-posSN98.eps}
         \caption{\P-positions of \SNr{9}{8}.}
         \label{fig:P-pos-odd}
     \end{subfigure}
     \hspace{0.1\textwidth}
     \begin{subfigure}[b]{0.40\textwidth}
         \centering
         \includegraphics[width=.79\textwidth]{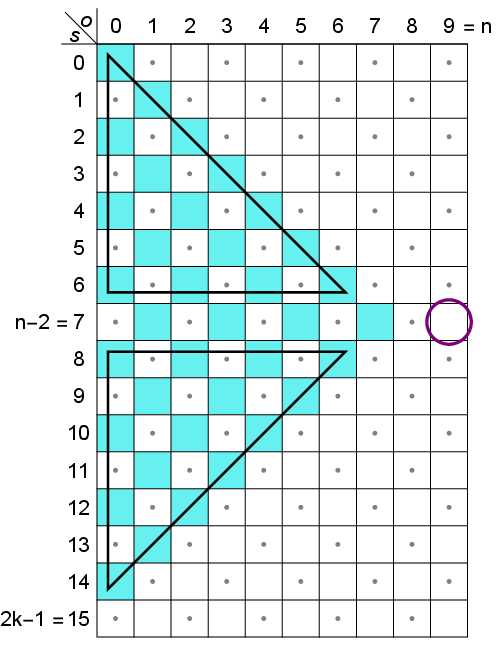}
         \caption{\P-positions of \SNr{9}{\{8,9\}}.}
         \label{fig:edge}
     \end{subfigure}
        \caption{Examples of \P-positions of games \SNr{n}{A} for $A=\{n-1\}$ on the left  and $A=\{n-1,n\}$ on the right, illustrated for $n=9$.  The circle indicates that position $(n-2,n)$ has changed from being a \P-position of \SNr{9}{8} to being an \N-position of \SNr{9}{\{8,9\}} by allowing the added move on all stacks. }
        \label{fig:SN(9,8)ext}
\end{figure}

\begin{proof} We will use Proposition~\ref{prop:GameExtended}. The set $\P$ equals the set $\P_{n,n-1}$ of Theorem~\ref{thm:P-pos_ex_k-1} and $s=\sp \bmod (2(n-1))$. The only added move is the move on all stacks, which we will refer to as POAS. Therefore, $E = \{(1,\ldots,1)\}$, and since $\p= (n-2,n)$ is the only position that changes its type as a result of POAS, we have that $Q = \{(n-2,n)\}$.  The non-reduction POAS move removes $n$ tokens and changes the parity of every stack. Applying Lemma~\ref{lem:redcondgenA} for  $a=n-1$, reduction is needed for POAS ($\ell=n)$ if and only if $\sp = (n-1)p_n$, the same condition as for the move on all but one stack in the case  $s\equiv 0\mod(n-1)$. Thus, the reduced option for POAS is  $r(\p')=(p_1-1,\dots,p_{n-\alpha}-1,p_n-2,\ldots,p_n-2)$, which means that some of the conditions of Proposition~\ref{prop:GameExtended} have already been proved for the game $\SN{n}{n-1}$. We check each of the three conditions:

\begin{enumerate} 
\item[(1)] Let $\p=(s,o)\in\P\setminus Q$.   The non-reduction POAS move results in $\p'= (s',n-o)$ where $s'=s+n-2$ if $s<n$ and $s'=s-n$ if $s \ge n$. Thus, the options are obtained by either a shift down in the grid by $n-2$ or a shift up in the grid by $n$ and then a reflection across the mid line. It can be easily verified by inspection in Figure~\ref{fig:SN(9,8)ext}(b) that the options are in $(\P\setminus Q)^c$. Second,  when a reduction is needed, the reduced position  is of the same form as when play is on all but one stack, and therefore $r(\p')\in \P^c$. Thus, for $\p \in \P\setminus Q$ we have that $\p' \in (\P\setminus Q)^c$.

\item[(2)]  We now consider positions $\p \in \N$ from which there is a move on $k=n-1$ stacks to  $\p'=(s',o')=(n-2,n)$. By Lemma~\ref{lem:noredopts}, the non-reduction   move is from   $\p = (2n-3,1)$ by playing on all but the single odd stack. Instead, we omit one of the even stacks and move to $\p''=(n-2,n-2) \in \P\setminus Q$. Reduction may only be needed when we omit a maximal stack, that is when $\p=(p_1,p_n,\ldots,p_n)$ with $p_1$ odd and $\sp=(n-1)p_n+p_1$.  If a reduction move would be needed from this position, then   Lemma~\ref{lem:redposodd} indicates that for $s=2n-3$, the maximum is odd,  a contradiction, so the move to $\p''$ is available. 

On the other hand, the position $\p'=(s',o')=(n-2,n)=(k-1,n)$ cannot be the result of a reduction move  by Lemma~\ref{lem:redposodd}.  If $s\not \equiv 0\bmod k$, then  $\p'=(s',o')=(k-1,n)$ would imply that $s=0$ or $s=2k$, a contradiction. If $s \equiv 0\bmod k$, then $s=0$ or $s=k$. In the first case,  $(s',o')=(k-1,n)$ would require $\alpha = 0$, a contradiction. If $s=k$, then $(s',o')=(k-1,n)$ would require $\alpha = k$ and $o=\alpha=k$, thus $\p=(k,k)$. We also know that in this case the maximum value $p_n$ is odd, so there are at most $k$ maxima. If we use the non-reduction move of playing on all the odd stacks (and hence all maxima), then $\p'=(0,0)\in \P\setminus Q$. Thus, the second condition is fulfilled.

\item[(3)] As for condition \ref{it:GameExtI3}, from the unique position $\p = (n-2,n) \in Q$, the non-reduction move on all $n$ stacks  results in  $\p'=(2n-4,0) \in \P\setminus Q$. By Lemma~\ref{lem:redcondgenA}, play on all $n$ stacks only may require reduction when  $\sp=(n-1)p_n$, so $s\in\{0,k\}$, which is not the case here.
\end{enumerate}
Thus, by Proposition~\ref{prop:GameExtended}, the $\P$-positions of the game \SN{n}{\{n-1,n\}} are those stated in Theorem~\ref{thm:P-pos_ex_AL}.
\end{proof}

\section{Conclusion and Future Work}\label{sec:open}

We introduced a general family of games, \ruleset{Slow$A$-Nim}  $\SN{n}{A}$, where $n$ is the number of stacks in the game and $A \subset \{1,2, \ldots,n\}$ is a set that indicates the number of stacks that can be selected in each move. This family includes games that were previously studied, such as 
\ruleset{Slow Exact $k$-Nim} with $k=n-1$. We derived the \P-positions for this non-trivial family of games, complementing prior results on an optimal strategy for the family given in~\cite{GMMV2023}. We also determined the \P-positions of a new family of games, namely \SN{n}{\{n-1,n\}}, whose \P-positions closely resemble those of \SN{n}{n-1}. In the proofs of the \P-positions of both families, we used an equivalent game played on reduced positions, with moves adjusted to result in an option that is also reduced. We also developed tools that apply for general sets $A$, namely the NIRB condition and conditions on when a reduction is needed after the move. A partial result on the general structure of the $\P$-positions highlights the role of the number of odd stacks in the description of the outcome classes of \ruleset{Slow$A$-Nim} games. These general tools will be useful in the further study of these games. Among the \ruleset{Slow$A$-Nim} games that do not require reduction because no unplayable tokens exist ($1 \in A)$, we solved the game $A=\{1,k\}$ for odd values of $k$ and the case when $k=n$ even, and gave results for the game $A=\{1,2\}$ for $n=3,4,5$.

We now turn to future work. Proposition~\ref{prop:Game_n_A_Strong} and computational evidence lead to the conjecture that the structure of Theorem~\ref{thm:P-pos_ex_AL} generalizes for play on at least $k$ stacks.

\begin{conjecture} \label{con:atleastk}
For the game $\SNr{n}{A}$ with  $A=\{k,k+1,\ldots,n\}$, let $s = \sp \bmod (2k)$ and  $o$ be the number of stack heights that are odd. Then for all positions in the game, $s$ and $o$  have the same parity, and the set of \P-positions is given by  
$$\begin{aligned}
  \P_{n,A} = & \{(s,o) \mid s,o \text{ same parity}, \, 0 \leq s < k-1, \, o\leq s\}\cup \{(k-1,o) \mid  o,n \text{ same parity}, \, 1 \leq o \leq k-1\}    \\
  & \cup\{(s,o)\mid s,o \text{ same parity},\,  k-1 < s < 2k-1, \, o \le 2(k-1)-s\}=:S_1\cup S_2 \cup S_3.
\end{aligned}$$
\end{conjecture}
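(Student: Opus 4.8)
The plan is to prove Conjecture~\ref{con:atleastk} by induction on $|A|$, adding one allowed move-size at a time and repeatedly invoking Proposition~\ref{prop:GameExtended}, exactly as was done in the proof of Theorem~\ref{thm:P-pos_n_odd_MG} for the single extension from $A=\{n-1\}$ to $A=\{n-1,n\}$. The base case $A=\{n-1,n\}$ is Theorem~\ref{thm:P-pos_n_odd_MG} (and for $k=n-1$ there is nothing more to do). For the inductive step, suppose the $\P$-positions of $\SN{n}{\{k+1,k+2,\ldots,n\}}$ have the claimed form with modulus $2(k+1)$; we want to add the move-size $k$ and show the $\P$-positions become the claimed set with modulus $2k$. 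The first obstacle, and a genuinely new feature compared to the $|A|=2$ case, is that changing $\min(A)$ from $k+1$ to $k$ changes the NIRB value (Theorem~\ref{thm:NIRB_A}), hence changes which positions are reduced and changes the modulus used for $s$. So the induction cannot be run purely inside one fixed playable game graph; instead I would phrase the whole argument in the \emph{full} game graph $\mathcal{G}$, track the characteristic $o$ (number of odd stacks) together with $\sp$, and only at the very end translate to reduced positions using Proposition~\ref{prop:GGcorr}.

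Concretely, here are the steps in order. First, establish the parity statement: $\sp \equiv o \pmod 2$ always, so $s$ and $o$ have the same parity — this is immediate and identical to the proof in Theorem~\ref{thm:P-pos_n_odd}. Second, set up the inductive step with $G = \SN{n}{\{k+1,\ldots,n\}}$, $G' = \SN{n}{\{k,k+1,\ldots,n\}}$, $E = \{\text{moves on exactly }k\text{ stacks}\}$, and candidate sets $\P' = S_1\cup S_2\cup S_3$ (with modulus $2k$) and $Q = \P \setminus \P'$, where $\P$ is the inductive hypothesis set with modulus $2(k+1)$. The delicate bookkeeping here is that $\P$ and $\P'$ live on grids with different moduli; I would instead describe everything by the pair $(\sp \bmod \mathrm{lcm}(2k,2k+2), o)$, or more cleanly, verify the three conditions of Proposition~\ref{prop:GameExtended} directly at the level of stack vectors using Lemmas~\ref{lem:noredopts}, \ref{lem:redposodd} (whose statements are really about a single move on $n-1$ stacks, so I would first state and prove their natural analogues for a move on exactly $k$ stacks — the parity count for $o'$ and the reduction form $r(\p')$ go through verbatim with $n-1$ replaced by $k$ and the $\alpha$-maxima argument unchanged). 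Third, check condition (1): no move on exactly $k$ stacks links two positions of $\P\setminus Q = \P'$. Using the $k$-stack analogue of Lemma~\ref{lem:noredopts}, a non-reduction $k$-move sends $(s,o)\mapsto ((s-k)\bmod 2k,\, o')$ with $o' \in \{n-o+1, n-o-1\}$ (two odd/even choices for the omitted-vs-played stacks) — the same "shift by $k$ then reflect across the midline" picture as in Figure~\ref{fig:OPtsEvenNRW}, so one reads off from the triangular shapes $S_1, S_2, S_3$ that every such option lands in $S^c$; the reduction moves are handled by the $k$-analogue of Lemma~\ref{lem:redposodd} just as in the proof of Theorem~\ref{thm:P-pos_n_odd}. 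Fourth, check condition (3): from each $\p \in Q$ there is an $E$-move into $\P'$. Here one needs to identify $Q$ explicitly — it should consist of the positions that were $\P$-positions for the smaller move set but are forced out once the $k$-move is allowed, and Theorem~\ref{thm:Game_n_A_Strong}(\ref{it:Game_n_A_Strong2}) says the entire column $o = k$ must become $\N$; I expect $Q$ to be exactly $\{(s,o)\in\P : o=k\}$ together with whatever extra cells of the larger-modulus grid fail to survive the modulus change, and for each I would exhibit the explicit $k$-move (typically: play on $k$ odd stacks, landing in column $o=0 \subset \P'$, or a reduction move landing on an edge of $S_1$ or $S_3$). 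Fifth, check condition (2): every $\p\in\N$ that had an $M$-move into $Q$ has an $M'$-move into $\P'$ — this is the "reroute" argument, mirroring the $(2n-3,1)\to(n-2,n)$ versus $(n-2,n-2)$ dichotomy in the proof of Theorem~\ref{thm:P-pos_n_odd_MG}: from the would-be mover we instead omit a different (even, non-maximal) stack, or play on $k$ stacks, to reach $\P'$, with the one subtle sub-case being when the position has all its even stacks maximal, resolved via the parity-of-maximum conclusion in the $k$-analogue of Lemma~\ref{lem:redposodd}.

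The main obstacle I anticipate is \textbf{not} the geometry of the triangles — that is genuinely the same picture at every stage, just with $k$ in place of $n-1$ — but rather the interaction between the shrinking modulus and the changing notion of "reduced." When $\min(A)$ drops from $k+1$ to $k$, the NIRB value drops from $\approx \sp/(k+1)$ to $\approx \sp/k$, so more positions become non-reduced, the modulus shrinks from $2(k+1)$ to $2k$, and a position's \emph{type} $(s,o)$ is recomputed against a different yardstick. The cleanest fix is to never fix the modulus during the induction: carry out the entire Proposition~\ref{prop:GameExtended} verification on stack vectors in $\mathcal{G}$ (where $o$ is unambiguous and moves are literally $\pm 1$ on chosen coordinates, with reduction applied afterward only to read off outcome classes), prove that $\P'$ as a set of stack vectors satisfies the two bullet conditions of Theorem~\ref{thm:lose} in $G'$, and only translate to the $(s,o)$ grid description at the end — checking, as a final bookkeeping step, that the set of reduced representatives of $\P'$ is precisely the grid set $S_1\cup S_2\cup S_3$ with modulus $2k$ claimed in the Conjecture, using Theorem~\ref{thm:NIRB_A} to see which $(s,o)$ cells contain reduced positions. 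A secondary technical point to be careful about is the degenerate range: when $k$ is small relative to $n$ the triangles $S_1$ and $S_3$ may overlap or collapse, and one must confirm $k-1 \geq 0$ and that $S_2$ (the row $s=k-1$) is nonempty and correctly parameterized; I would state the result for $2 \leq k \leq n-1$ and verify the boundary indices explicitly, noting that $k=n-1$ recovers Theorems~\ref{thm:P-pos_n_odd} and~\ref{thm:P-pos_n_odd_MG} and $k=1$ is the trivial game \SN{n}{[n]}.
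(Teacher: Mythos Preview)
The paper does not prove this statement --- it is explicitly presented as a \emph{conjecture} in Section~\ref{sec:open}, with no proof given. There is therefore no paper argument to compare your proposal against.

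That said, your plan contains a concrete error that would cause it to fail. Your claimed ``$k$-stack analogue of Lemma~\ref{lem:noredopts}'' is incorrect: when play is on exactly $k$ stacks with $k<n-1$, more than one stack is omitted, and if $j$ of the $k$ played stacks were odd then $o' = o + k - 2j$, with $j$ ranging over $\max(0,k-(n-o))\le j\le \min(k,o)$. The two-valued formula $o'\in\{n-o-1,\,n-o+1\}$ is specific to the single-omitted-stack case $k=n-1$; it does \emph{not} ``go through verbatim with $n-1$ replaced by $k$.'' Consequently the ``shift by $k$ then reflect across the midline'' picture you invoke in step~(3) is not what a $k$-move does, and your verification of condition~(1) of Proposition~\ref{prop:GameExtended} collapses.

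There is also a structural gap in the inductive framework. Proposition~\ref{prop:GameExtended} can only conclude that the new \P-set equals $\P\setminus Q$ for some $Q\subset\P$; in particular it forces $\P'\subset\P$. But when $\min(A)$ drops from $k+1$ to $k$, both the modulus (from $2(k+1)$ to $2k$) and the NIRB threshold (Theorem~\ref{thm:NIRB_A}) change, so the conjectured $\P'$ and the inductive $\P$ are described on different grids over different playable game graphs, and you give no argument that $\P'\subset\P$ as sets of stack vectors. This inclusion is not automatic: adding moves to a game can turn an \N-position into a \P-position (a position whose only winning move led to a newly-destroyed \P-position may itself become \P). Your suggestion to ``work in the full game graph'' does not bypass this, since Proposition~\ref{prop:GameExtended} still presupposes the subset relation. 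A direct attack via Theorem~\ref{thm:lose}, after first writing down the correct multi-valued option map $o\mapsto o+k-2j$ and the genuine $k$-analogue of Lemma~\ref{lem:redposodd}, would be a sounder starting point --- but the paper leaves this open for a reason.
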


Figure~\ref{fig:atleastk} shows the position grids for $A=\{6,7,8,9\}$ (conjectured) and for $A=\{8,9\}$ (proven). Note that the columns for which $o \in A$ are $\N$-positions, and that the modulus of $2k$, where $k=\min(A)$, shrinks the position grid in a way that we still have the same triangular structure, just on a smaller scale. 

\begin{figure}[thb]
     \centering
     \begin{subfigure}[b]{0.35\textwidth}
         \centering
         \includegraphics[width=.75\textwidth]{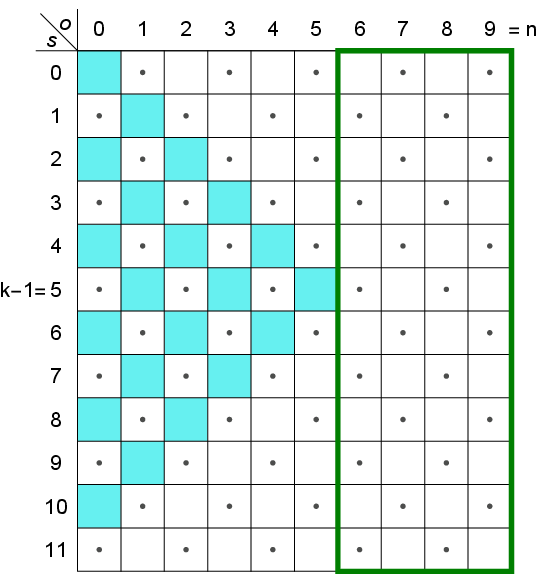}
         \caption{$\P$-positions of $\SN{9}{\{6,7,8,9\}}$.}
         \label{fig:A6-9}
     \end{subfigure}
     \hspace{0.1\textwidth}
     \begin{subfigure}[b]{0.35\textwidth}
         \centering
         \includegraphics[width=.75\textwidth]{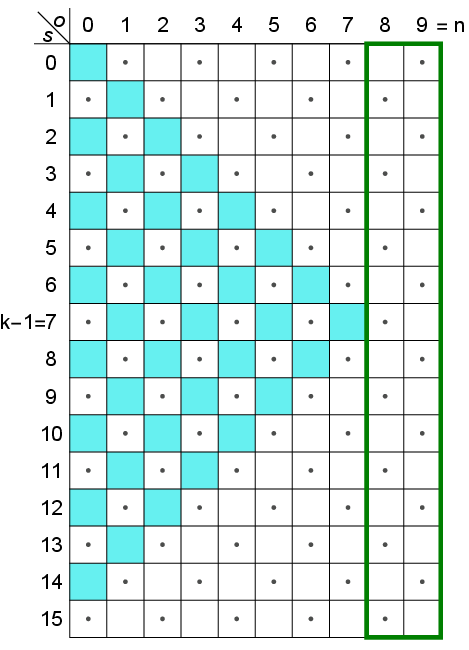}
         \caption{$\P$-positions of $\SN{9}{\{8,9\}}$.}
         \label{fig:A8-9}
     \end{subfigure}
        \caption{Conjectured structure of the \P-positions when play is on at least $k$ stacks, illustrated for $n=9$ and $k=6$ and $k=8$ (proven), respectively.}
        \label{fig:atleastk}
\end{figure}

We also propose a number of open questions. For \ruleset{Slow Exact $k$-Nim}, the next obvious choice for determining \P-positions are the families for $k=2$ and $k=n-2$. Partial results for the case $k=2$ have been given by \cite{GHHC20} for $n=3,4$ and by~\cite{ChiGurKno2021} for $n=5,6$. No results have been shown for the case $k=n-2$.

\begin{open} 
What are the \P-positions of the families \SN{n}{2}
 and \SN{n}{n-2}, and more generally, for \SN{n}{k} with $2 \leq k \leq n-2$?    
\end{open}

With regard to more general (non-singleton) sets $A$,  Theorem~\ref{thm:1k} with $k=n$ shows that the game \SN{n}{\{1,n\}} is solved. What about the game \SN{n}{\{k,n\}} for other values of $k$?

\begin{open}
    What are the \P-positions for the set $A=\{k,n\}$ for $2 \leq k\leq n-2$?
\end{open} 

Theorems~\ref{thm:1k} and~\ref{thm:12_some_} give rise to open problems regarding even values of $k$ for the set $A=\{1,k\}$. Even the case $k=2$ proves to be difficult; we have only been able to find patterns for the \P-positions when $n \leq 5$. By Proposition~\ref{prop:Game_n_A_Strong}, we know that positions where all stacks are even belong to \P~for any $n$, but not much else can be said in general.

\begin{open}
    What are the \P-positions of \SN{n}{A} for the set $A=\{1,2\}$ for $n \geq 6$?
\end{open}

\begin{open}
    What are the \P-positions of \SN{n}{A} for the set $A=\{1,k\}$ for even $k$ with $2 < k < n $?
\end{open}

 From Theorems~\ref{thm:P-pos_ex_k-1} and~\ref{thm:P-pos_ex_AL}, we know that the \P-positions of \SN{n}{\{n-1,n\}} are a subset of the \P-positions of \SN{n}{n-1}. Also, since the \P-positions of \SN{n}{1} consist of the positions for which $\sp$ is even (since exactly one token is removed in every move), we have by Theorem~\ref{thm:1k} that the \P-positions of \SN{n}{\{1,n\}} are a subset of the \P-positions of \SN{n}{1}. This leads to the following question. 

\begin{open}
    For which values of $1<k<n$ is the  set of \P-positions of \SN{n}{A} with $A=\{k,n\}$ a subset of the \P-positions of the game  with $A=\{k\}$? 
\end{open}

We look forward to progress on any of these research questions. We also thank the anonymous referee for very careful reading and detailed feedback that improved the structure and clarity of the paper.

\end{document}